\newcommand{\mytitle}{Quantitative OCT reconstructions for dispersive media}
\title{\mytitle}
\date{November 26, 2019}
\author{Peter Elbau$^1$\\{\footnotesize\href{mailto:peter.elbau@univie.ac.at}{peter.elbau@univie.ac.at}}
\and Leonidas Mindrinos$^2$\\{\footnotesize\href{mailto:leonidas.mindrinos@ricam.oeaw.ac.at}{leonidas.mindrinos@ricam.oeaw.ac.at}}
\and Leopold Veselka$^1$\\{\footnotesize\href{mailto:lepold.veselka@univie.ac.at}{leopold.veselka@univie.ac.at}}}
\titleformat{\section}{\filcenter\sc\large}{\thesection.\;}{0em}{}
\titleformat{\subsection}[runin]{\bf}{\thesubsection.\;}{0em}{}[.]
\footnotesize\sc{Quantitative OCT reconstructions for dispersive media}}%
\theoremstyle{break}
\newtheorem{lemma}{Lemma}[section]
\newaliascnt{proposition}{lemma}
\newtheorem{proposition}[proposition]{Proposition}
\newaliascnt{corollary}{lemma}
\newaliascnt{assumptions}{lemma}
\newaliascnt{invpro}{lemma}
\newaliascnt{definition}{lemma}
\newaliascnt{example}{lemma}
\newtheorem{example}[example]{Example}
\newaliascnt{convention}{lemma}
\newaliascnt{remark}{lemma}
\newtheorem{remark}[remark]{Remark}
\theoremstyle{nonumberplain}
\newtheorem{proof}{Proof}
\newcommand{\R}{\mathbbm{R}}
\newcommand{\C}{\mathbbm{C}}
\newcommand{\N}{\mathbbm{N}}
\renewcommand{\b}{\bm}
\let\RE\Re
\let\Re=\undefined
\DeclareMathOperator{\Re}{\RE e}
\let\IM\Im
\let\Im=\undefined
\DeclareMathOperator{\Im}{\IM m}
\DeclareMathOperator{\sign}{sign}
\DeclareMathOperator{\supp}{supp}
\newcommand{\norm}[1]{\left\|#1\right\|}
\renewcommand{\b}{\mathds{1}}
\begin{document}

\maketitle
\hspace*{4em}
\parbox[t]{0.4\textwidth}{\footnotesize
\hspace*{-1ex}$^1$Faculty of Mathematics\\
University of Vienna\\
Oskar-Morgenstern-Platz 1\\
A-1090 Vienna, Austria}
\parbox[t]{0.4\textwidth}{\footnotesize
\hspace*{-1ex}$^2$Johann Radon Institute for Computational\\
\hspace*{0.1em}and Applied Mathematics (RICAM)\\
Altenbergerstrasse 69\\
A-4040 Linz, Austria}

\vspace*{2em}

\begin{abstract}
We consider the problem of reconstructing the position and the time-dependent optical properties of a linear dispersive medium from OCT measurements. The medium is multi-layered described by a piece-wise inhomogeneous refractive index. The measurement data are from a frequency-domain OCT system and we address also the phase retrieval problem. The parameter identification problem can be formulated as an one-dimensional inverse problem. Initially, we deal with a non-dispersive medium and we derive an iterative scheme that is the core of the algorithm for the frequency-dependent parameter. The case of absorbing medium is also addressed. 
\end{abstract}

\section{Introduction}\label{sec1}

Optical Coherence Tomography (OCT) is nowadays considered as a well-established imaging modality producing high-resolution images of biological tissues. Since it first appeared in the beginning of 1990s \cite{FerHitDreKamSat93, HuaSwaLinSchuStiCha91, SwaIzaHeeHuaLinSchu93}, OCT has gained increasing acceptance because of its non-invasive nature and the use of non-harmful radiation. Main applications remain tissue diagnostics and ophthalmology. It operates at the visible and near-infrared spectrum and the measurements consist mainly of the backscattered light from the sample. OCT is analogous to Ultrasound Tomography where acoustic waves are used and differs from Computed Tomography (where electromagnetic waves are also used) because of its limited penetration depth (few millimeters) due to the lower energy radiation. As OCT data we consider the measured intensity of the backscattered light at some detector area usually far from the medium. 

However, the intensity of light, undergoing few scattering events, is not measured directly, but the OCT setup is based on low coherence interferometry. The incoming broadband and continuous wave light passes through a beam-splitter and it is split into two identical beams. One part travels in a reference path and is totally back-reflected by a mirror and the second part is incident on the sample. The backscattered from the sample and the back-reflected light are recombined and their superposition is then measured at a detector. The maximum observed intensity refers to constructive interference, and this happens when the two beams travel equal lengths. For a detailed explanation of the experimental setup we refer to \cite{Fer10, TomWan05} and to the book \cite{DreFuj15}.

The way the measurements are performed characterizes and differentiates an OCT system. We summarize here the different setups considered in this work:

\begin{description}
\item[Time-domain OCT:] The reference mirror is moving and for each position a measurement is performed. By scanning the reference arm, different depth information from the sample is obtained. 

\item[Frequency-domain OCT:] The mirror is placed at a fixed position and the detector is replaced by a spectrometer, which captures the whole spectrum of the interference pattern. 

\item[State-of-the-art OCT:] The incoming light is focused, through objective lenses, to a specific region at a certain depth in the sample. The backscattered light is measured at a point detector. 
 
\item[Standard OCT:] The vector nature of light is ignored and the electromagnetic wave is treated as a scalar quantity. Then, only the total intensity is measured.
\end{description}

Time- and Frequency-domain OCT provide almost equivalent measurements that are connected through a Fourier transform. The advantage of the later is that no mechanical movement of the mirror is required, improving the acquisition time. The last two cases simplify the following mathematical analysis since we can consider scalar quantities and depth-dependent optical parameters. For an overview of the different mathematical models that can be used in OCT we refer to the book chapter \cite{ElbMinSch15}.

We consider Maxwell's equations to model the light propagation in the sample, which is assumed to be a linear isotropic dielectric medium. We deal with dispersive and non-dispersive media. Firstly, using a general representation for the initial illumination, we  present the direct problem of computing the OCT data, given the optical properties of the sample. Then, we derive reconstruction methods for solving the inverse problem of recovering the refractive index, real or complex valued. Motivated by the layer stripping algorithms \cite{Som94, SylWinGyl96}, we present a layer-by-layer reconstruction method that alternates between time and frequency domain and holds for dispersive media. 
 
Without loss of generality, the OCT system can be simplified  by placing the beam-splitter 
and the detector at the same position.  The medium 
is contained in a bounded domain $\Omega \subset \R^3,$ such that 
$\mbox{supp} \chi (t,\cdot) \subset \Omega,$ for all $t\in\R,$ where $\chi$ is the electric susceptibility,  a scalar quantity describing the optical properties of a linear dielectric medium. We set $\chi$ to zero for negative times. Also, the medium for $t\leq 0$ is assumed to be in a stationary state with zero stationary fields. Then, the electric field $E \in C^\infty (\R\times \R^3;\R^3)$ and the magnetic field $H \in C^\infty (\R\times \R^3;\R^3),$ in the absence of charges and currents, satisfy the Maxwell's equations
\begin{equation}\label{eq_max}
\nabla\times E (t,\mathrm{x}) + \frac1c \frac{\partial H}{\partial t } (t,\mathrm{x}) = 0, \quad  \nabla\times H (t,\mathrm{x}) - \frac1c \frac{\partial D}{\partial t } (t,\mathrm{x}) = 0,  
\end{equation}
where $c$ is the speed of light and $D$ is the electric displacement, given by
\begin{equation}\label{eq_const}
D(t,\mathrm{x}) = E(t,\mathrm{x}) + 4\pi \int_\R \chi (\tau, \mathrm{x}) E (t-\tau, \mathrm{x}) d \tau.
\end{equation}
This relation models a linear dielectric, dispersive medium with inhomogeneous, isotropic and non-stationary parameter.

The two identical laser pulses, one incident on the sample and the other on the mirror, are described initially, before the time $t=0,$ as vacuum solutions of the Maxwell's equations, meaning \eqref{eq_max} for $D\equiv E,$ defined by
$E_0, \, H_0 \in C^\infty (\R\times \R^3;\R^3).$ In practice, the medium is illuminated by a Gaussian light, however at the scale of the sample the laser pulse can be approximated by a linearly polarized plane wave  \cite{Fer96}. We assume that the incident wave does not interact with the medium until $t=0,$ resulting to the condition
\begin{equation}\label{eq_initial}
E (t,\mathrm{x}) = E_0 (t,\mathrm{x}), \quad  H (t,\mathrm{x}) = H_0 (t,\mathrm{x}), \quad t <0, \, \mathrm{x} \in \R^3.
\end{equation}

The mirror is modeled as a medium with (infinitely) large constant electric susceptibility, with surface given by the hyperplane placed at $r\in\R$ distance from the source. Given the form of the incident wave, the reference field (back-reflected field), denoted by $E_r,$ can be explicitly calculated. 

The sample wave (backscattered wave) is given as a solution of the system 
\eqref{eq_max} -- \eqref{eq_initial}.  Then, the two backward traveling waves are recombined at the beam splitter, assumed to be at the detector position. In time-domain OCT, the sum of these two fields, integrated over all times, is measured at each point of the two-dimensional detector array $\mathcal D \subset \R^2$.   Thus, as observed quantity we consider
\begin{equation}\label{eqEffectiveMeasurements}
	\int_\R  |(E-E_0)(t,\mathrm{x}) + (E_r -E_0) (t,\mathrm{x}) |^2 dt, \quad r \in \R, \, \mathrm{x}\in\mathcal D.
\end{equation}

Under some assumptions on the incident field \cite{ElbMinSch15}, we may recover from the above measurements, the quantity
\begin{equation}\label{data_oct_time}
(\hat E-\hat E_0)(\omega ,\mathrm{x}), \quad \omega \in \R, \, \mathrm{x}\in\mathcal D,
\end{equation}
where $\hat f = \mathcal{F} (f)$ denotes the Fourier transform of $f$ with respect to time
\begin{equation*}
\mathcal{F} (f) (\omega) = \int_\R f(t) e^{i \omega t } dt.
\end{equation*}

In frequency-domain OCT, the detecting scheme is different. The mirror is not moving  ($r$ is fixed) and the detector is replaced by a spectrometer. Then, the intensity of the sum of the Fourier transformed fields at every available frequency (corresponding to different pixels at the CCD camera) is measured
\begin{equation}\label{eq_fourier_data}
\hat m (\omega, \mathrm{x}) =  |(\hat E-\hat E_0)(\omega ,\mathrm{x}) + (\hat E_r -\hat E_0) (\omega ,\mathrm{x}) |^2, \quad \omega \in \R, \, \mathrm{x}\in\mathcal D.
\end{equation}

In practice, we obtain data only for few frequencies restricted by the limited bandwidth of the spectrometer. The OCT system allows also for measurements of the intensities of the two fields independently, by blocking one arm at a time. Thus, we assume that the quantity 
\begin{equation}\label{eq_fourier_data2}
\hat m_s(\omega, \mathrm{x}) =   |(\hat E-\hat E_0)(\omega, \mathrm{x}) |^2, \quad \omega \in \R, \, \mathrm{x}\in\mathcal D,
\end{equation}
is also available.  The main difference between the two setups is that \eqref{data_oct_time} provide us with the full information of the backscattered field, amplitude and phase, which is not the case in \eqref{eq_fourier_data2}, where we get phase-less data. We address later the problem of phase retrieval, meaning how to obtain \eqref{data_oct_time} from  \eqref{eq_fourier_data2}.

Up to now, what we have modeled is known as full-field OCT where the whole sample is illuminated by an extended field. The main problem is that we want to reconstruct a $(1+3)$-dimensional function $\chi$ from OCT data, either \eqref{eqEffectiveMeasurements} or \eqref{eq_fourier_data}, which are $(1+2)$-dimensional. Thus,  we have to impose additional assumptions in order to compensate for the lack of dimension. To solve this problem, we consider a medium which admits  a multi-layer structure. This assumption is not far from reality since OCT is mainly used in  ophthalmology (imaging the retina) and human skin imaging. In both cases the imaging object consists of multiple layers with varying properties and thicknesses \cite{HeeIzaSwaHua95, KriBar04}.

If the medium is non-dispersive, meaning that the optical parameter is stationary, the function $\chi$ can be modeled as a $\delta-$distribution in time, so that its Fourier transform (temporal) does not depend on frequency.  Then, even if we have enough information (theoretically), in OCT, as in any tomographic imaging technique, we deal with the problem of inverting partial and limited-angle data. This is the result of measuring only the back-scattered light for a limited frequency spectrum. In OCT, a narrow beam is used, resulting to an  almost monochromatic illumination centered around a frequency. 

In the following, we focus on data provided from a state-of-the-art and standard OCT system, where point-like illumination is used. In this case, only a small region inside the object is illuminated so that the function $\chi$ can be assumed depth-dependent and constant in the other two directions.  Again we assume that locally the illumination is still properly described by a plane wave. 

Let $\mathrm{x} = (x,y,z),$ where the $z-$direction denotes the depth direction. We model the light as a transverse electric polarized electromagnetic wave of the form
\begin{equation*}
E (t, \mathrm{x}) = \begin{pmatrix}
0 \\ u(t,z) \\ 0
\end{pmatrix}, \quad H (t, \mathrm{x}) = \begin{pmatrix}
v (t,z) \\  0 \\ w (t,z)
\end{pmatrix}.
\end{equation*} 
Then, the Maxwell's equations \eqref{eq_max} together with \eqref{eq_const} are simplified to 
\begin{equation}\label{eq_helm}
\Delta u (t,z) - \frac{1}{c^2} \frac{\partial^2}{\partial t^2} \int_\R \epsilon (\tau, z) u (t-\tau, z) d \tau = 0,
\end{equation}
for the scalar valued function $u,$ where $\Delta = \partial^2 / \partial z^2.$ Here, we define the time-dependent electric permittivity $\epsilon (t,z) = \delta (t) + 4\pi \chi (t,z),$ which varies also with respect to depth. The condition  \eqref{eq_initial} is replaced by
\begin{equation}\label{eq_initial2}
u (t,z) = u_0 (t,z), \quad   t <0, \, z \in \R.
\end{equation}

The medium admits a multi-layered structure with $N$ layers orthogonal to the $z-$direction, having spatial-independent but time-dependent refractive indices $n = \sqrt{\epsilon},$ and varying lengths. We define $L = \cup_{j=1}^N L_j$ and we set
\begin{equation}
\label{rei}
n (t, z) = \begin{cases}
n_0,   & z \in \R \setminus \overline{L},\\
n_j(t), & z \in L_j .
\end{cases}
\end{equation}

\begin{figure}[t]
\begin{center}
\includegraphics[width=0.9\textwidth]{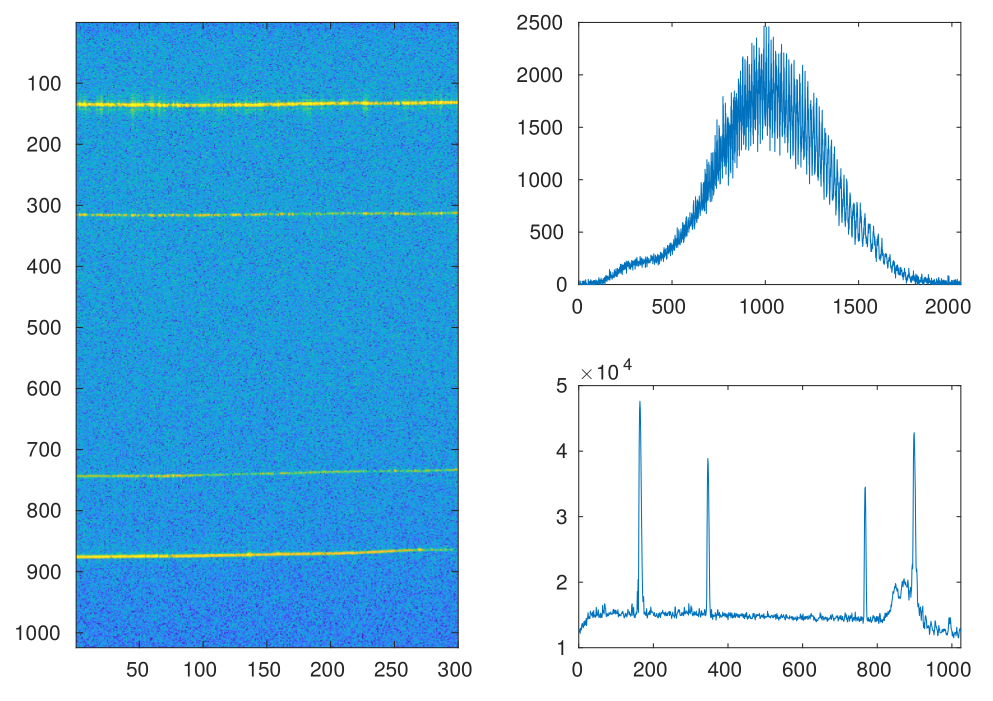}
\caption{Experimental data obtained from a frequency-domain OCT system of a three-layer medium with piece-wise constant refractive index.  Courtesy of Ryan Sentosa and Lisa Krainz, Medical University of Vienna.}\label{fig_real}
\end{center}
\end{figure}

This setup is commonly used for modeling the problem of parameter identification from OCT  data. Even the volumetric OCT data consist of multiple A-scans, which is a one-dimensional cross-sectional of the medium across the $z-$direction. Under the assumption of a layered medium, the multiple A-scans are averaged over the $x-$ and $y-$directions producing a profile of the measured intensity with respect to frequency or depth (post-processed image).  

In Fig. \ref{fig_real}, we see the experimental data for a three-layer medium with total length $0.7$mm, having  two layers (top and bottom) filled with Noa61 $(n_1 = n_3 \approx 1.55)$ and a middle one filled with DragonSkin $(n_2 \approx 1.405)$.  The spectrometer uses a grating with central wavelength $840$nm, going from $700$ to $960$nm. On top right, we see an A-scan of the $``$raw$"$ data (depth information), meaning the intensity of the combined sample and reference fields at a given point on the surface plane. The left picture is the post-processed B-scan (two-dimensional cross-sectional of the volumetric data) where we see clearly the four reflections from the boundaries. The bottom right picture presents  the averaged (over lateral dimension) post-processed version of the data on the left. All $x-$axes are in pixel units.

We refer to \cite{BruCha05, SeeMul14, ThrYurAnd00, TomWan06} for recent works using similar setup and assumptions. Our work differs from previous methods in that we consider dispersive medium. We deal also with absorbing medium, a property that is usually neglected. We address three different cases for the layered medium:
\begin{itemize}
\item $n_j (t) \equiv n_j, \quad j = 1,...,N$ (non-dispersive),
\item $ \hat n_j (\omega) \in \R, \quad j = 1,...,N$ (dispersive),
\item $ \hat n_j (\omega) \in \C, \quad j = 1,...,N$ (dispersive with absorption).
\end{itemize}

The paper is organized as follows. In Sec. \ref{sec2} we present the forward problem, meaning given the medium (location and properties) find the measurement data. We derive formulas that are also needed for the corresponding inverse problem, which we address in Sec. \ref{sec3}. Iterative schemes are presented for dispersive media and a mathematical model is given for the case of absorbing media. In Sec. \ref{sec4} we give numerical results for simulated data, and we show that the parameter identification problem can be solved under few assumptions.

\section{The forward problem}\label{sec2}

We derive mathematical models for the direct problem in OCT for multi-layer media with piece-wise inhomogeneous refractive index. We start with a single-layer medium and then we generalize to more layers. The multiple reflections are also taken into account. Most of the formulas presented in this section, like the solutions of the initial value problems or the reflection and transmission operators (analogue to the Fresnel equations) can be found in classic books on partial differential equations \cite{Eva10, Str07} and optics \cite{BorWol99, Che90, Hec02}, respectively. However, we summarize them here, on one hand because we want to derive a rigorous mathematical model in both time and frequency domains and on the other hand because they are needed for the corresponding inverse problems. 
The easier but essential time-independent case is treated first. Then, we consider the time-dependent case by moving to the frequency-domain for real and complex valued parameters. 

\subsection{Non-dispersive medium}\label{subsec_non}

Here, we simplify \eqref{rei}, and we consider the following form for the refractive index
\begin{equation}\label{eq_refract}
n (z) = \begin{cases}
n_0,   & z \in \R \setminus \overline{L},\\
n_j, & z \in L_j,
\end{cases}
\end{equation}
for $j=1,...,N.$ We describe the light propagation using \eqref{eq_helm} together with \eqref{eq_initial2}. Under the above assumption, we obtain 
\begin{equation}
\label{l53}
\partial_{tt}u(t,z) = \tfrac{c^2}{n^2(z)}\Delta u(t,z), \quad t\in\R,\,z\in\R,
\end{equation} 
the one-dimensional wave equation. In the following, we  use $c_j = c/n_j, \, j = 0,...,N.$ Let us assume that the initial field is given by the form 
\begin{equation}
\label{l1}
u_0(t,z) = f_0(z-  c_0 t),
\end{equation}
together with the assumption that $\mbox{supp} f_0 \subset (-\infty,z_1)$, where $z_1$ represents the surface (first boundary point) of the medium $L$. This assumption on the support of the function reflects the condition that the laser beam does not interact with the probe until time $t=0$. 

We model the single-layer medium as $L = (z_1, \,z_2),$ for $z_1 < z_2,$ but initially we consider the case
\begin{equation}\label{single_ref}
n(z)=\begin{cases}
n_0,& z<z_1, \\
n_1,& z>z_1.
\end{cases}
\end{equation} 

Then, we obtain the system  
\begin{equation}\label{l2}
\begin{aligned}
\partial_{tt}u(t,z) &= \tfrac{c^2}{n^2(z)}\Delta u(t,z), &t\in\R,\;z\in\R, \\
u(t,z) &= f_0(z-  c_0 t), &t<0,\;z\in\R. 
\end{aligned}
\end{equation}

\begin{figure}[t]
    \centering
    \begin{subfigure}
        \centering
        \includegraphics[scale=1]{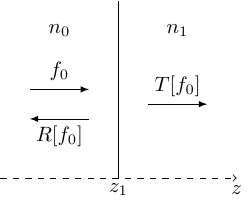}
    \end{subfigure}%
    \hspace{1.5cm}
    \begin{subfigure}
        \centering
        \includegraphics[scale=1]{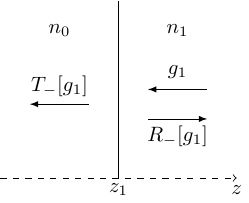}
    \end{subfigure}
    \caption{Wave propagation. The reflection and transmission operators for the sub-problem \eqref{l2} (left) and the sub-problem \eqref{l10} (right).
        }\label{fig_motion}
\end{figure}

The above system of equations describes a wave traveling from the left incident on the interface at $z_1 \in \R$, see the left picture of Fig. \ref{fig_motion}. It is easy to derive the solution, which is given by
\begin{equation}
\label{l4}
u(t,z)=\begin{cases}
u_{-}(t,z)=f_0(z- c_0 t)+g_0(z+ c_0 t), &t\geq 0, \, z<z_1,\\[1.5ex]
u_+ (t,z)=f_1(z- c_1 t), &t\geq 0, \,  z>z_1. 
\end{cases}
\end{equation}
Here, the function $g_0$ and $f_1$ describes the reflected and transmitted field, respectively. Given the continuity condition at $z=z_1$, meaning
\begin{equation*}
\lim_{z\uparrow z_1} u_-(t,z)=\lim_{z\downarrow z_1} u_+(t,z), \quad \lim_{z\uparrow z_1} \partial_z u_-(t,z)=\lim_{z\downarrow z_1} \partial_z u_+(t,z), 
\end{equation*}
we find a representation of $g_0$ and $f_1$ via operators. We denote the reflection operator by $R$ and the transmission operator by $T$, defined by
\begin{equation}\label{l7}
R: f_0 \mapsto g_0, \quad R[f_0](z+ c_0 t)=\frac{c_1-c_0}{c_1+c_0}f_0 \left(2z_1-(z+c_0 t)\right),
\end{equation}
and 
\begin{equation*}
T: f_0 \mapsto f_1, \quad T[f_0](z- c_1 t)=\frac{2c_1}{c_1+c_0}f_0\left(z_1+\frac{c_0}{c_1}((z-c_1 t)-z_1)\right).
\end{equation*}

The fact that $\mbox{supp} f_0\subset (-\infty,z_1)$ implies that for every $t<0$, $R[f_0]=0,$ in $(-\infty,z_1),$ and $T[f_0]=0,$ in $(z_1,\infty)$. This is true, since neither the reflected nor the transmitted wave exists before the interaction of the initial wave with the boundary. Finally, we define the operator $U_1 : f_0\mapsto u,$ mapping the initial function $f_0$ to the solution $u,$ given by \eqref{l4}, of the  problem \eqref{l2}.

Now we consider the following problem
\begin{equation}\label{l10}
\begin{aligned}
\partial_{tt}u(t,z) &= \tfrac{c^2}{n^2(z)}\Delta u(t,z), &t\in\R,\;z\in\R, \\
u(t,z) &= g_1(z+c_1 t), &t<0,\;z\in\R,
\end{aligned}
\end{equation}
for an initial wave $g_1$ with $\mbox{supp} g_1 \subset (z_1,\infty),$ for $n$ as in \eqref{single_ref}. 

This problem refers to the case of a wave incident from the right on the boundary $z=z_1$, see the right picture in Fig. \ref{fig_motion}. Again, we obtain a reflected and a transmitted part of the wave. The solution of this problem is given by
\begin{equation*}
u(t,z)=\begin{cases}
u_{-}(t,z)=g_0(z+ c_0 t), &t\geq 0, \, z<z_1,\\[1.5ex]
u_+ (t,z)=f_1(z- c_1 t)+g_1(z+ c_1 t), &t\geq 0, \,  z>z_1.
\end{cases}
\end{equation*} 
As previously, we find a representation of the reflected and transmitted waves using operators acting on the initial wave. Here, we denote the reflection operator by $R_-$ and the transmission operator by $T_-,$ having the forms 
\begin{equation*}
R_-: g_1 \mapsto f_1,
\quad R_-[g_1](z- c_1 t)=\frac{c_0-c_1}{c_1+c_0}g_1\left(2z_1-(z-c_1 t)\right),
\end{equation*}
and 
\begin{equation*}
T_-: g_1 \mapsto g_0, \quad 
T_-[g_1](z+c_0 t)=\frac{2c_0}{c_1+c_0}g_1\left(z_1+\frac{c_1}{c_0}((z+c_0 t)-z_1)\right).
\end{equation*}
We define the solution operator $U_2:g_1\mapsto u$, mapping the initial $g_1$ to the solution of the problem \eqref{l10}. 

It is trivial to model an operator $U_3 : f_1 \mapsto u,$ where $f_1$ satisfies  $\supp f_1 \subset (-\infty,z_2),$ for an interface at $z =z_2,$ with $n(z) = n_2,$ for $z > z_2.$ This setup models how a transmitted, from the boundary at $z =z_1,$ wave propagates for $t\geq 0.$ We know that on $(-\infty,z_2)$, $U_3[f_1]$ is of the form
\begin{equation*}
U_3[f_1](t,z)=f_1(z-c_1t)+g_1(z+c_1t).
\end{equation*} 
We define in addition the operator $R_+:f_1\mapsto g_1$.
These three different cases are combined to produce the following result.  
\begin{proposition}
\label{l15}
	Let $L = (z_1, \, z_2)$ be a single-layer medium, and let the refractive index be given by
	\begin{equation*}
     n(z)=\begin{cases}
     n_0,& z<z_1, \\
     n_1,& z\in (z_1,z_2), \\
     n_2,& z>z_2.
     \end{cases}
	\end{equation*} 
If the initial wave $f_0,$ see \eqref{l1}, satisfies $\supp f_0\subset (-\infty,z_1),$ then the solution of  \eqref{l53}, together with $u(t<0,z) = f_0,$ is given by
	\begin{equation}\label{l17}
	\begin{aligned}
	u(t,z) &=\b_{(-\infty,y)}\left(U_1[f_0](t,z)+ \sum_{j=0}^{\infty}{U_2\left[ (R_+R_-)^j R_+T f_0\right]  (t,z)}\right)\\
	&\phantom{=}+
	\b_{(y,\infty)}\sum_{j=0}^{\infty}{U_3\left[(R_-R_+)^j T f_0\right]}(t,z), \quad t\geq 0, \, y\in (z_1, \, z_2),	
	\end{aligned}
	\end{equation} 
	with $U_1, \, U_2,$ and $U_3$ defined as before. \end{proposition}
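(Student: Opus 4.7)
The strategy is to verify that the right-hand side of \eqref{l17} satisfies the wave equation on each of the strips $(-\infty,z_1)$, $(z_1,z_2)$, $(z_2,\infty)$, matches $u$ and $\partial_z u$ across the interfaces $z=z_1$ and $z=z_2$, and reduces to $f_0(z-c_0t)$ for $t<0$; uniqueness of the classical solution of \eqref{l53} with a prescribed past then yields the claim. Working with this ansatz is natural because each summand represents the contribution of a single reflection/transmission history, and the operators $U_1,U_2,U_3$ already encode the matching at a single interface, so all that remains is to combine them correctly.

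A first step is to observe that the two infinite series are locally finite. Since $\supp f_0\subset(-\infty,z_1)$, each additional factor $R_+R_-$ (respectively $R_-R_+$) shifts the support of the corresponding wave by roughly $2(z_2-z_1)$ inside the middle layer, so by finite propagation speed only finitely many summands are nonzero at any fixed $(t,z)$. This immediately handles the initial condition: for $t<0$ every bounced wave has empty intersection with the region where it could contribute, so only $U_1[f_0](t,z)=f_0(z-c_0t)$ survives. The wave equation on each open strip then follows by superposition, using that $U_1[f_0]$ and each $U_2[\,\cdot\,]$ solve \eqref{l53} in the two-region medium with interface at $z_1$ and speeds $c_0,c_1$, while each $U_3[\,\cdot\,]$ solves \eqref{l53} in the two-region medium with interface at $z_2$ and speeds $c_1,c_2$; restricting $U_1,U_2$ to $(-\infty,y)$ and $U_3$ to $(y,\infty)$ with $y\in(z_1,z_2)$ ensures the correct speed is used inside each of the three strips, and one must also check that the two representations agree on the overlap so that the splitting point $y$ is immaterial.

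The remaining and main obstacle is the bookkeeping at the interfaces. At $z=z_1$, the incoming right-going wave $f_0$ produces through $U_1$ a reflection $Rf_0$ to the left and a transmission $Tf_0$ to the right, while each arriving left-going middle-layer wave $(R_+R_-)^jR_+Tf_0$ is processed by $U_2$ into a transmitted piece on $(-\infty,z_1)$ and a newly reflected right-going wave $R_-(R_+R_-)^jR_+Tf_0=(R_-R_+)^{j+1}Tf_0$. At $z=z_2$, each right-going wave $(R_-R_+)^jTf_0$ is processed by $U_3$ into a transmission into $(z_2,\infty)$ and a reflection $R_+(R_-R_+)^jTf_0=(R_+R_-)^jR_+Tf_0$, which is exactly the input of the $j$-th $U_2$ summand. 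This telescoping ties every wave generated at either interface to exactly one summand in the expansion, so that the jumps of $u$ and $\partial_z u$ at $z_1$ and $z_2$ cancel. Pinning this matching down cleanly, i.e.\ showing that no boundary-generated wave is double-counted or dropped, is the combinatorial heart of the proof; once it is in place, superposition plus uniqueness finishes the argument.
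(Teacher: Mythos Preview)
Your proposal is correct and its core is the same telescoping computation the paper carries out. The difference is one of framing: you treat the ``bookkeeping at the interfaces'' $z_1$ and $z_2$ as the main obstacle, whereas the paper observes that this is free. By definition, $U_1[f_0]$ and each $U_2[\,\cdot\,]$ are $C^1$ solutions of the two-region problem with interface $z_1$, and each $U_3[\,\cdot\,]$ is a $C^1$ solution of the two-region problem with interface $z_2$; hence the left piece is automatically $C^1$ at $z_1$ and the right piece is automatically $C^1$ at $z_2$. The \emph{only} thing to check is that the two pieces agree on the overlap $(z_1,z_2)$, which is exactly the identity you call ``telescoping'': writing out $U_1,U_2,U_3$ in the middle strip and using $R_-(R_+R_-)^jR_+=(R_-R_+)^{j+1}$ and $R_+(R_-R_+)^j=(R_+R_-)^jR_+$ shows the two series coincide. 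So what you flag as the combinatorial heart is indeed the heart, but it is one check, not two. Your local-finiteness remark is a useful addition (the paper merely assumes convergence), and your appeal to uniqueness is unnecessary since the formula is verified directly.
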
  

\begin{proof}
	The function $u,$ given by \eqref{l17}, is by construction a solution to the wave equation problem in both $(-\infty,y)$ and $(y,\infty)$. Thus, we have only to check if both parts coincide in the interval $(z_1,z_2)$. To do so, we recall the definitions
	\begin{align*}
	U_1[f](t,z) &= T[f](z-c_1t), \\
	U_2[g](t,z) &= R_-[g](z-c_1t)+g(z+c_1t),\\
	U_3[f](t,z) &= f(z-c_1t)+R_+[f](z+c_1t),
	\end{align*}
	for $t\geq 0,$ and $z\in (z_1, \, z_2).$
	By plugging these formulas in \eqref{l17} we get for $(-\infty,y)$ the term
	\begin{equation*}
	T[f_0](z-c_1t)+\sum_{j=0}^{\infty}{R_-\left[(R_+R_-)^j R_+T f_0\right]}(z-c_1t)+\sum_{j=0}^{\infty}{(R_+R_-)^j R_+T \left [f_0\right]}(z+c_1t),
	\end{equation*} 
	and for $(y,\infty)$ the term
	\begin{equation*}
	\sum_{j=0}^{\infty}{(R_-R_+)^j T \left[f_0\right]}(z-c_1t)+\sum_{j=0}^{\infty}{R_+\left[(R_-R_+)^j T f_0\right]}(z+c_1t).
	\end{equation*}
	Under the assumption that both series are convergent, we see that the two terms coincide. 
The last thing to show is that \eqref{l17} also satisfies the initial condition 
    $u(t,z)=f_0(z-c_0t),$
    for all $t<0$ and $z\in \R$. This can be seen by considering the supports of the operators $U_1 , \, U_2$ and $U_3,$ as they are defined previously. 
\end{proof}

The formula \eqref{l17}, consists of three terms and accounts also for the multiple reflections occurring in the single-layer medium. Each term neglects either the boundary at $z_1,$ or the one at $z_2$. The operator $U_1$ maps the initial wave $f_0$ to the solution $u$, in the real line. Then, the transmitted wave $T[f_0]$ is traveling back and forth between $z_1$ and $z_2$, describing the multiple reflections, given by the field
\begin{equation}
\label{l18}
(R_-R_+)^j T \left [f_0\right ]. 
\end{equation}   
The operator $U_2$ now uses for every $j\in \N$ the reflection of \eqref{l18} at $z_2$ as initial and gives back a solution of the sub-problem \eqref{l10}. The last term models the wave interacting with the boundary at $z=z_2$, by application of $U_3,$ which uses \eqref{l18} as an initial function.

In the following example, we present the forms of the  single- and double-reflected wave from the boundary at $z = z_2,$ measured in $(-\infty,z_1).$
 
\begin{example}
	\label{l20}
	We know already that the reflected wave from the boundary $z = z_1,$ is given by \eqref{l7}. We present now the reflected waves $\varrho_{ \eta_r}, \, \eta_r =1,2$ in the interval $(-\infty,z_1),$ where $\eta_r$ counts for the numbers of the undergoing reflections, meaning
	\begin{equation*}
	\varrho_1(t,z)=T_- R_+T [f_0], \quad \text{and}\quad
	\varrho_2(t,z)=T_- R_+R_-R_+T [f_0].
	\end{equation*}
For $\eta_r =1,$	using the definition of the operator $T$ applied to $f_0$ we get
	\begin{equation*}
	\varrho_1(t,z)=\frac{2c_1}{c_1+c_0}T_- R_+f_0\left(z_1+\frac{c_0}{c_1}((z-c_1t)-z_1)\right). 
	\end{equation*}
The argument of $f_0$ is now a function of $z-c_1t$, and we can apply $R_+,$ resulting to 
	\begin{equation*}
    \varrho_1(t,z)=\frac{c_2-c_1}{c_2+c_1}\frac{2c_1}{c_1+c_0}T_- f_0\left(z_1+\frac{c_0}{c_1}(2z_2-(z+c_1t)-z_1)\right). 
	\end{equation*}
Thus, we have
	\begin{equation*}
	R_+T [f_0] (z-c_0t)=g(z+c_1t),
	\end{equation*}
	a function of $z+c_1t,$ where the operator $T_-$ can be to applied to give
	\begin{equation*}
	\varrho_1(t,z)=\frac{2c_0}{c_1+c_0}\frac{c_2-c_1}{c_2+c_1}\frac{2c_1}{c_1+c_0}f_0\left(z_1+\frac{c_0}{c_1}(2(z_2-z_1)-\frac{c_1}{c_0}((z+c_0t)-z_1))\right).
	\end{equation*}
Following the same procedure for $\eta_r =2$ now, we end up with the following form
	\begin{equation*}
	\begin{aligned}
	\varrho_2(t,z) &=\frac{2c_0}{c_1+c_0}\frac{c_0-c_1}{c_1+c_0}\left(\frac{c_2-c_1}{c_2+c_1}\right)^2\frac{2c_1}{c_1+c_0} \\
&\phantom{=}\times	f_0\left(z_1+\frac{c_0}{c_1}(4(z_2-z_1)-\frac{c_1}{c_0}((z+c_0t)-z_1))\right).
	\end{aligned}
	\end{equation*}
\end{example}

Now, we move to the case of a multi-layer medium. The solution will be derived using the above formulas and consider the problem layer-by-layer. We define $L_j = (z_j,\, z_{j+1}),$ for $j =1,...,N.$ The refractive index is given by \eqref{eq_refract} and we define  
\begin{equation}
\label{l23}
n_-(z)=\begin{cases}
n_0, & z < z_1,\\
n_1, & z > z_1,
\end{cases}, \quad n_+(z)=\begin{cases}
n_1, & z < z_2,\\
\tilde n_+(z), & z > z_2.
\end{cases}
\end{equation}
The parameter $\tilde n_+$ represents the refractive index in the remaining $N-1$ layers. 

Next we want to find a solution of \eqref{l53} for 
\[
n(z)=\begin{cases}
n_- (z), & z < y,\\
n_+ (z), & z > y,
\end{cases}
\]
and $y\in (z_1, \, z_2).$  The first case is exactly the same as the problem already discussed for the single-layer case, meaning that the application of the operators $U_1$ and $U_2$ is still valid. For the later case, we consider an initial wave $f$ with $\mbox{supp} f\subset(-\infty,z_2)$ and by $U_3[f]$ we denote the solution of this sub-problem. We know that in $(-\infty,z_2)$, $U_3[f]$ gives
\begin{equation*}
u(t,z)=f(z-c_1 t)+g(z+c_1 t).
\end{equation*}
We define an operator $R_+$ with $R_+[f]=g,$ corresponding to the multi-reflected light from the boundaries $z_2,\dots,z_{N+1}$, if we illuminate illumination by $f$. Then, we get the following result.
 
\begin{proposition}
	\label{l25}
	Let $n$ be defined by \eqref{eq_refract}, and $n_-$, $n_+$ as in \eqref{l23}. Then the solution of \eqref{l53}, together with $u(t<0,z) = f_0,$ is given by 
	\begin{equation}
	\begin{aligned}
	\label{l26}
	u(t,z) &=\b_{(-\infty,y)}\left(U_1[f_0](t,z)+ \sum_{j=0}^{\infty}{U_2\left[(R_+R_-)^j R_+T f_0\right](t,z)}\right) \\
&\phantom{=}+
	\b_{(y,\infty)}\sum_{j=0}^{\infty}{U_3\left[(R_-R_+)^j T f_0\right]}(t,z), \quad t\geq 0, \, y \in (z_1, \, z_2).	
	\end{aligned}
	\end{equation}
\end{proposition}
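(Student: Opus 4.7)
My plan is to mirror the proof of Proposition \ref{l15} almost verbatim, exploiting the key structural observation that \eqref{l26} is formally identical to \eqref{l17}: only the interpretation of the solution operator $U_3$ and the reflection operator $R_+$ has been broadened to encompass the multi-layer stack to the right of $z_2$. The proof therefore reduces to three tasks: (i) check that each summand on the right of \eqref{l26} solves \eqref{l53} in the appropriate half-line; (ii) check that the two branches agree on the overlap $(z_1,z_2)$; and (iii) verify the initial condition via support arguments.

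For (i), the contributions involving $U_1$ and $U_2$ solve the wave equation on $(-\infty,y)$ by construction, since there the refractive index coincides with $n_-$, which is exactly the setup for which those operators were defined in Subsection~\ref{subsec_non}. The contributions involving $U_3$ solve the wave equation on $(y,\infty)$ by the very definition of $U_3$ as the propagation operator for the $n_+$ medium. For (ii), the plan is to substitute the local decompositions
\begin{align*}
U_1[f](t,z) &= T[f](z-c_1 t),\\
U_2[g](t,z) &= R_-[g](z-c_1 t)+g(z+c_1 t),\\
U_3[f](t,z) &= f(z-c_1 t)+R_+[f](z+c_1 t),
\end{align*}
which remain valid on $(z_1,z_2)$ because there $n_-=n_+=n_1$. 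Plugging these into \eqref{l26} produces two geometric-type sums that agree term by term, exactly as in the proof of Proposition~\ref{l15}. For (iii), the condition $\supp f_0 \subset (-\infty, z_1)$, together with the support-propagation properties of the operators $R,T,R_\pm,T_\pm$, ensures that every summand vanishes on the relevant region for $t<0$, and the expression collapses to $f_0(z-c_0 t)$.

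The main obstacle is the existence and well-definedness of $U_3$ and $R_+$ in the genuinely multi-layer setting; this is the point at which Proposition~\ref{l25} goes beyond Proposition~\ref{l15}. I would establish both by induction on the number of layers $N$. The base case $N=1$ is supplied by Proposition~\ref{l15}. For the inductive step, given an incoming wave $f$ with support in $(-\infty,z_2)$, I would decompose its interaction at $z_2$ into a Fresnel-reflected part (contributing one term to $R_+[f]$) and a transmitted part; apply the inductive hypothesis to the reduced $(N-1)$-layer stack beginning at $z_2$ to obtain the reflection of the transmitted wave; propagate it back to $z_2$ and transmit it out; and iterate, so that $R_+[f]$ appears as an infinite sum of multiply-reflected contributions analogous to \eqref{l18}. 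Convergence of these sums, which is tacitly invoked in the term-by-term matching of step (ii), should follow from the strict contractivity $|(c_{j+1}-c_j)/(c_{j+1}+c_j)|<1$ at each interface, which forces the amplitudes of successive reflections to decay geometrically on any compact time interval.
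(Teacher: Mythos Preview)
Your proposal is correct and follows exactly the route the paper intends: Proposition~\ref{l25} is stated in the paper without proof, precisely because the argument is a verbatim repetition of the proof of Proposition~\ref{l15} once $U_3$ and $R_+$ are reinterpreted for the multi-layer tail. Your steps (i)--(iii) reproduce that proof, and your additional remarks on the inductive construction of $R_+$ and the geometric decay of the reflection amplitudes make explicit what the paper leaves tacit.
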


We know that the solution of \eqref{l53} in $(-\infty, z_1)$ admits the form
\begin{equation*}
u(t,z)=f_0(z-c_0 t)+g(z+c_0 t),
\end{equation*}
and we define an operator $\tilde R,$ through $\tilde R[f_0] = g$. 

\begin{proposition}
	\label{l27}
	Let the operators $R, R_-$ and  $T$ be defined as previously and $\tilde R[f_0]$ be given. Then, the following holds
    \begin{equation}\label{eq_lemma24}
    R_+[\tilde f]=\tilde{g}, 
    \end{equation}  
 where 
	\begin{equation}
	\label{l28}
    \tilde f=(T+R_-U_2^{-1}(\tilde R-U_1+I))[f_0], \quad \text{and}\quad	\tilde g=U_2^{-1}(\tilde R-U_1+I) [f_0].
	\end{equation}
\end{proposition}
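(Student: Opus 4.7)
The plan is to identify $\tilde f$ and $\tilde g$ as the right- and left-moving components of the true solution $u$ in the first layer $L_1=(z_1,z_2)$, and to derive their expressions by matching against the reflected wave $\tilde R[f_0]$ on the outer side of the boundary $z_1$. By \autoref{l25} applied with $y\in(z_1,z_2)$, the solution on $(y,\infty)\cap L_1$ admits the d'Alembert decomposition
\[
u(t,z)=\tilde f(z-c_1 t)+\tilde g(z+c_1 t),\qquad
\tilde f=\sum_{j=0}^\infty (R_-R_+)^j T f_0,\qquad
\tilde g=R_+\tilde f,
\]
since the action of $U_3$ inside $L_1$ is precisely $f\mapsto f(z-c_1 t)+R_+[f](z+c_1 t)$. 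Thus the identity $R_+[\tilde f]=\tilde g$ is nothing but the definition of $R_+$, and the only work left is to show that the same $\tilde f$ and $\tilde g$ admit the explicit representations claimed.

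For this I would use the continuity of $u$ and $\partial_z u$ at $z=z_1$. In $(-\infty,z_1)$ the total field is $f_0(z-c_0 t)+\tilde R[f_0](z+c_0 t)$, while in $L_1$ it is $\tilde f(z-c_1 t)+\tilde g(z+c_1 t)$. Splitting the two matching equations into contributions that originate purely from $f_0$ (giving $T$ and $R$) and contributions that originate from the internal left-mover $\tilde g$ impinging on $z_1$ from the right (giving $R_-$ and $T_-$), one obtains
\[
\tilde f = T[f_0] + R_-[\tilde g],\qquad T_-[\tilde g]=\tilde R[f_0]-R[f_0].
\]
Using that $U_1$, when restricted to $(-\infty,z_1)$, acts as $U_1[f_0]=f_0(z-c_0 t)+R[f_0](z+c_0 t)$, I can replace $R[f_0]$ by $(U_1-I)[f_0]$ in the second identity to get $T_-[\tilde g]=(\tilde R-U_1+I)[f_0]$. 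Similarly, $U_2$ restricted to $(-\infty,z_1)$ equals $T_-$, so that $U_2^{-1}=T_-^{-1}$ on the reflected channel, whence $\tilde g = U_2^{-1}(\tilde R-U_1+I)[f_0]$; plugging this back into $\tilde f=T[f_0]+R_-[\tilde g]$ yields the claimed formula.

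The principal obstacle is not mathematical but notational: $U_1,U_2$ are presented as operators producing functions of $(t,z)$, while $T,R,R_-,T_-$ produce functions of a single characteristic variable. The argument relies on the identifications $U_1-I\equiv R$ and $U_2\equiv T_-$ on the half-line $(-\infty,z_1)$, i.e.\ on consistently reading a left-traveling wave $h(z+c_0 t)$ either as an element of the image of $U_1,U_2$ or of $R,T_-$. Once this identification is made precise, the derivation reduces to two elementary algebraic manipulations, and the convergence of the Neumann-type series defining $\tilde f,\tilde g$ is inherited from the convergence assumption already used in the proof of \autoref{l15}.
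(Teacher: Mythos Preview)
Your argument is correct and ends at the same identities as the paper, but the route is organized differently. The paper works entirely on the region $(-\infty,z_1)$: it equates the representation $f_0+\tilde R[f_0]$ with $U_1[f_0]+\sum_j U_2[(R_+R_-)^jR_+Tf_0]$ from \autoref{l25}, inverts $U_2$ to obtain $U_2^{-1}(\tilde R-U_1+I)[f_0]=(1-R_+R_-)^{-1}R_+T[f_0]$, and then rearranges this Neumann series identity algebraically into $R_+[\tilde f]=\tilde g$. You instead read off $R_+[\tilde f]=\tilde g$ directly from the $U_3$-side of \autoref{l25} inside $L_1$ (making the main relation tautological), and only then use boundary matching at $z_1$ to extract the explicit formulas $\tilde f=T[f_0]+R_-[\tilde g]$ and $T_-[\tilde g]=\tilde R[f_0]-R[f_0]$; the identifications $U_1-I\equiv R$ and $U_2\equiv T_-$ on $(-\infty,z_1)$ you flag are precisely what the paper uses implicitly. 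Your version is slightly more transparent physically and avoids the explicit geometric-series step; the paper's version is more compact and stays in a single spatial region.
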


\begin{proof}
From Proposition \ref{l25} we know that
	\begin{equation*}
	u(t,z)=U_1[f_0](t,z)+ \sum_{j=0}^{\infty}{U_2\left[(R_+R_-)^j R_+T f_0\right]}(t,z),\quad t\geq 0, z < z_1, 
	\end{equation*}
	describes a solution. It also holds that 
	\begin{equation*}
	u(t,z)=f_0(z-c_0t)+\tilde R[f_0](z+c_0t), \quad t\geq 0, z < z_1.
	\end{equation*}
Then, using $I f_0(t,x)=f_0(z-c_0t)$, we get 
	\begin{equation*}
	U_2^{-1}(\tilde R-U_1+I)[f_0]=(1-R_+R_-)^{-1}R_+T [f_0],
	\end{equation*}
which admits the equivalent form 
	\begin{equation*}
	U_2^{-1}(\tilde R-U_1+I)[f_0]-R_+R_-U_2^{-1}(\tilde R-U_1+I)[f_0]=R_+T [f_0].
	\end{equation*}
	This results to \eqref{eq_lemma24} for $\tilde f$ and $\tilde g,$ as in \eqref{l28}.
\end{proof}

\begin{remark}
The function $\tilde g$ describes the total amount of light which travels back from the remaining $N-1$ layers, meaning it considers all multiple reflections.
\end{remark}

\begin{lemma}
	\label{lem25}
	Let $L$ be a multi-layer medium consisting of $N\in \N$ layers, and let the refractive index be given by \eqref{eq_refract}.  Then, the solution of \eqref{l53}, together with $u(t<0,z) = f_0,$ can be computed layer-by-layer.
\end{lemma}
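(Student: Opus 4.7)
My plan is to prove the lemma by induction on the number $N$ of layers, using Proposition \ref{l25} as the stitching mechanism and Proposition \ref{l27} to compute the recursively defined reflection operator.

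For the base case $N=1$, the claim reduces exactly to Proposition \ref{l15}: the solution in a single-layer medium is given by the explicit formula \eqref{l17}, which involves only the operators $U_1, U_2, U_3, T, R_-, R_+$ attached to the two interfaces $z_1$ and $z_2$. At the deeper interface $z_2$ we have pure transmission into a homogeneous half-space with index $n_2$, so $R_+$ reduces to the elementary Fresnel reflection operator at $z_2$, and $U_3$ is given in closed form.

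For the inductive step, suppose the statement holds for any medium with $N-1$ layers. Given a medium $L=\bigcup_{j=1}^N L_j$ with refractive index \eqref{eq_refract}, I would split $n$ as in \eqref{l23} into the left part $n_-$, which only sees the interface at $z_1$, and the right part $n_+$, which encodes the sub-medium formed by the layers $L_2,\dots,L_N$ embedded in $n_1$ on the left and $n_0$ on the right. The operators $U_1, U_2, T, R, R_-$ are determined entirely by $n_0$ and $n_1$ and are therefore explicitly known. The remaining operators $U_3$ and $R_+$ are exactly the solution and reflection operators of an $(N-1)$-layer problem starting at $z_2$; by the inductive hypothesis they can be computed layer-by-layer. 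Once they are available, Proposition \ref{l25} assembles the full field via \eqref{l26}, and Proposition \ref{l27} provides the explicit relation \eqref{eq_lemma24} that ties the measurable total back-reflection $\tilde R[f_0]$ to the deeper operator $R_+$, closing the recursion.

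The main obstacle I anticipate is bookkeeping rather than analysis: one must be careful that in the recursion the operators denoted $R_-, R_+, T, U_j$ always refer to the correct interface (since they were originally introduced for a single interface at $z_1$), and that the geometric series $\sum_{j\ge 0}(R_+R_-)^j$ and $\sum_{j\ge 0}(R_-R_+)^j$ appearing in \eqref{l26} converge at each recursion level. The convergence follows from the fact that the Fresnel coefficients $\tfrac{c_{j+1}-c_j}{c_{j+1}+c_j}$ are bounded by one in modulus, so that on any finite time window only finitely many terms contribute (each round-trip takes time at least $2|L_j|/c_j$), and the layer-wise construction is well-defined. With this verified, combining the inductive hypothesis with Propositions \ref{l25} and \ref{l27} yields the claimed layer-by-layer scheme and completes the proof.
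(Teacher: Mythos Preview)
Your proof is correct and follows essentially the same strategy as the paper: both peel off the first layer using Proposition~\ref{l25} and invoke Proposition~\ref{l27} to link the first-layer operators to the remaining $(N-1)$-layer sub-problem, then recurse. The paper phrases the recursion in the forward direction---extract the effective incident wave $\tilde f$ on the remaining layers via \eqref{l28} and repeat with $f_0$ replaced by $\tilde f$---whereas you phrase it as a formal induction in the backward direction---assume $R_+$ and $U_3$ for the $(N-1)$-layer sub-medium are already available by the inductive hypothesis and assemble the full field via \eqref{l26}. These are two descriptions of the same layer-stripping recursion. Your explicit remark that on any finite time window only finitely many round-trips contribute, so the series $\sum_{j\ge 0}(R_\pm R_\mp)^j$ are well-defined, addresses a point the paper leaves implicit, making your version slightly more careful there.
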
   

\begin{proof}
	Starting with the first layer, we use $n$ defined in \eqref{eq_refract} and \eqref{l23} and we apply Proposition \ref{l25}. We thus obtain $\tilde f$ and $\tilde g,$ presented in Proposition \ref{l27}. Then, the function $\tilde f$ is the initial wave for the corresponding problem with parameter now given by
	\begin{equation*}
	\label{l30}
	n(z)=\begin{cases}
	n_1,& z<z_2, \\
	n_2,& z\in (z_2,z_3), \\
	\tilde n(z),& z>z_3,
	\end{cases}
	\end{equation*}
where $\tilde n$ represents the refractive index of the next $N-2$ layers.  Repeating the same argument, we use \eqref{l26}, with $f_0$ replaced by $\tilde f,$ for the updated operators. We continue this procedure for the new parameters and operators and we end up with the solution for $n$ given by  \eqref{eq_refract}.
\end{proof}

After some lengthy but straightforward calculations, we  can generalize the formulas of Example \ref{l20} for the $k$-th layer of the medium and $\eta_r \in \N,$ resulting to the field 
	\begin{equation}
	\label{multi}
	\begin{aligned}
	\varrho_{\eta_r}(t,z) &=\sum_{q=1}^{\eta_r}{\left(\frac{c_{k-1}-c_k}{c_{k}+c_{k-1}}\right)^{q-1}\left(\frac{c_{k+1}-c_k}{c_{k+1}+c_k}\right)^q}\prod_{j=1}^{k}{\frac{4c_{j-1}c_j}{({c_j}+c_{j-1})^2}}\\
&\phantom{=}\times	f_0\left( \left(\sum_{j=1}^{k-1}{2z_j(1-\frac{c_{j-1}}{c_{j}})\prod_{l=1}^{j-1}{\frac{c_{l-1}}{c_{l}}}}+z_{k}(2-(2+2(q-1))\frac{c_{k-1}}{c_{k}})\prod_{l=1}^{k-1}{\frac{c_{l-1}}{c_{l}}}\right)\right.
\\
&\left.\phantom{=}+z_{k+1}\frac{c_0}{c_{k}}(2+2(q-1))-(z+c_0t)\right),	\quad t\geq 0, \, z < z_1,
	\end{aligned}
\end{equation}
valid for an initial function $f_0,$ with $\mbox{supp} f_0\subset (-\infty, z_1).$

\subsection{Dispersive medium}

In this section, we consider the form \eqref{rei} for the refractive index and we set  $\hat n_0 (\omega) = \hat n_0 >0.$ We assume $\Re \{\hat n (\omega)\} >0,$ and 
$\Im \{\hat n (\omega)\} \geq 0,$ for all $\omega\in\R.$ We recall \eqref{eq_helm}.
Unfortunately,  an explicit solution, as in Sec. \ref{subsec_non}, cannot be derived here for a time dependent parameter. However, applying the Fourier transform with respect to time, we get the Helmholtz equation
\begin{equation}
\label{l32}
\Delta \hat u(\omega,z)+\frac{\omega^2}{c^2}\hat n^2(\omega,z)\hat u(\omega,z)=0, \quad \omega \in \R, \, z\in \R.  
\end{equation} 

For $y\in(z_1,\, z_2),$ we define 
\[ \hat n(\omega,z) = \begin{cases}\hat n_-(\omega,z), &z<y,\\
\hat n_+(\omega,z), &z>y,\end{cases} \]
with
\[ \hat n_-(\omega,z) = \begin{cases}\hat n_0, &z<z_{1},\\ 
\hat n_1(\omega), &z > z_1,\end{cases}\quad \text{and}\quad\hat n_+(\omega,z) = \begin{cases}\hat n_1(\omega), &z < z_2,\\
\tilde n(\omega,z), &z>z_{2}.\end{cases} \]
The refractive index $\tilde n$ accounts for the parameter of the remaining $N-1$ layers. 

Initially, we consider the problem of a right-going incident wave of the form
\begin{equation}\label{eq_inc_freq}
\hat u_0(\omega,z)=\alpha_0(\omega)e^{i\frac{\omega}{c} \hat n_0 z},
\end{equation}
incident at the interface $z = z_1.$ Then, the corresponding problem reads
\begin{equation*}
\begin{aligned}
\Delta \hat u(\omega,z)+\frac{\omega^2}{c^2}\hat n_-^2(\omega)\hat u(\omega,z) &=0, & \omega &\in \R, \, z\in\R,   \\
\partial_z\hat u-i\frac{\omega}{c}\hat n_1(\omega)\hat u &=0, & \omega &\in \R, \, z=z_1^+,
\end{aligned}
\end{equation*} 
for an artificial boundary point $z_1^+>z_1$. The boundary radiation condition is such that there is no left-going wave at the region $(z_1, +\infty).$

The solution admits the form
\begin{equation*}
\hat u(\omega,z)=\begin{cases}
\hat u_0+R[\alpha_0](\omega)e^{-i\frac \omega c \hat n_0  z},& z<z_1, \\[1.5ex]
T[\alpha_0](\omega)e^{i\frac \omega c \hat n_1(\omega) z}, & z>z_1,
\end{cases}
\end{equation*}
where we define the reflection and transmission operators $R$ and $T,$ respectively,  by 
\begin{equation}
\label{l39}
\begin{aligned}
R(\omega) &: \alpha_0(\omega)\mapsto \frac{\hat n_0 -\hat n_1(\omega)}{\hat n_0 +\hat n_1(\omega)}\alpha_0(\omega)e^{2i\frac \omega c \hat n_0 z_1}, \\
T(\omega) &: \alpha_0(\omega)\mapsto \frac{2\hat n_0 }{\hat n_0  + \hat n_1(\omega)}\alpha_0(\omega)e^{i\frac \omega c (\hat n_0-\hat n_1 (\omega)) z_1}.
\end{aligned}
\end{equation} 
The solution operator is then given by $V_1: \hat u_0\mapsto \hat u$.  The next sub-problem is described by
\begin{equation*}
\begin{aligned}
\Delta \hat u(\omega,z)+\frac{\omega^2}{c^2}\hat n_-^2(\omega)\hat u(\omega,z) &=0, & \omega &\in \R, \,z\in\R,  \\
\partial_z\hat u+i\frac{\omega}{c}\hat n_0 \hat u &=0, & \omega &\in \R, \,  z=z_1^- .
\end{aligned}
\end{equation*} 
for an incident left-going wave of the form $\hat u_{0}(\omega,z)=\beta_1(\omega)e^{-i\frac \omega c \hat n_1(\omega) z},$ and an artificial boundary point at $z_1^-< z_1$. The boundary radiation condition is that the left-going wave in $(-\infty, z_1)$ is zero. The solution now is given by
\begin{equation*}
\hat u(\omega,z)=\begin{cases}
 T_- [\beta_1](\omega)e^{-i\frac \omega c \hat n_0 z},& z<z_1, \\[1.5ex]
\hat u_0 + R_-[\beta_1](\omega)e^{i\frac \omega c \hat n_1(\omega) z}, & z>z_1,
\end{cases}
\end{equation*}
where $R_-$ and $T_-$ are defined by
\begin{equation*}
\begin{aligned}
R_-(\omega) &: \beta_1(\omega)\mapsto \frac{\hat n_1(\omega)-\hat n_0}{\hat n_1(\omega)+\hat n_0}\beta_1(\omega)e^{2i\frac \omega c \hat n_1(\omega) z_1}, \\
 T_- (\omega) &: \beta_1(\omega)\mapsto \frac{2\hat n_1(\omega)}{\hat n_1(\omega)+\hat n_0 }\beta_1(\omega)e^{i\frac \omega c (\hat n_0-\hat n_1)(\omega) z_1}.
\end{aligned}
\end{equation*}  
Let again $V_2:\hat u_{0} \mapsto \hat u,$ denote the corresponding solution operator. 

The final sub-problem, deals with the scattering of a right-going wave of the form   $\hat u_{0}(\omega,z)=\alpha_1(\omega)e^{i\frac{\omega}{c} \hat n_1(\omega) z},$ by a medium supported in  $(z_2, \, +\infty)$ with refractive index $\tilde{n}.$ The governing equations are 
\begin{equation*}
\begin{aligned}
\Delta \hat u(\omega,z)+\frac{\omega^2}{c^2}\hat n_+^2(\omega)\hat u(\omega,z) &= 0, & \omega &\in \R, \, z\in\R, \  \\
\lim_{z\rightarrow +\infty}\hat u  (\omega,z) &= e^{i\frac{\omega}{c}\int_{z_2}^{z}\tilde n(\omega, y)d y }, & \omega &\in \R.
\end{aligned}
\end{equation*}
The radiation condition now ensures that in $(z_2 , \, +\infty
)$ exist only right-going waves.
The solution is given by
\begin{equation*}
\hat u(\omega,z)=\alpha_1(\omega)e^{i\frac{\omega}{c} \hat n_1(\omega) z}+ \beta_1(\omega)e^{-i\frac{\omega}{c} \hat n_1(\omega) z}, \quad z<z_2.
\end{equation*} 
We define $R_+:\alpha_1(\omega)\mapsto \beta_1(\omega),$ and the relevant operator $V_3: \hat u_{0}\mapsto \hat u,$ mapping the incident field to the solution of this specific problem. 
We remark that the operator $R_+$  cannot be computed explicitly because it contains also the information from the remaining $N-1$ layers.

 \begin{proposition}
	\label{lemma01}
	Let the incident wave be of the form \eqref{eq_inc_freq}.  We define
	\begin{equation*}
	\begin{aligned}
	\hat u^j_{0,-}(\omega,z) &= [(R_+R_-)^j R_+T\alpha_0](\omega)e^{-i\frac \omega c \hat n_1(\omega) z}, \\
	\hat u^j_{0,+}(\omega,z) &= [(R_-R_+)^j T\alpha_0](\omega)e^{i\frac \omega c \hat n_1(\omega) z}. 
	\end{aligned}
	\end{equation*}
	Then, the field
	\begin{equation}
	\label{sol}
	\hat u(\omega,z)=\b_{(-\infty,y)}\left( V_1[\hat u_0]+\sum_{j=0}^{\infty}{V_2 [\hat u^j_{0,-}]}\right) (\omega,z) +\b_{(y,\infty)}\left(\sum_{j=0}^{\infty}{V_3 [\hat u^j_{0,+}]}\right)(\omega,z),
	\end{equation}
	for $y\in (z_1, z_2),$ is the solution of the Helmholtz equation \eqref{l32}, for the refractive index defined as above. 
\end{proposition}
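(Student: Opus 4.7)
The plan is to mirror the proof of \autoref{l15}, but now working in the frequency domain with plane-wave building blocks: by construction each of the operators $V_1,V_2,V_3$ maps its given incident field to a solution of the corresponding single-interface Helmholtz problem, so the right-hand side of \eqref{sol} automatically solves \eqref{l32} on $(-\infty,y)$ with refractive index $\hat n_-$ and on $(y,\infty)$ with refractive index $\hat n_+$. Since $\hat n_\pm$ coincide with $\hat n$ on their respective sides and $y\in(z_1,z_2)$ lies in a region where the exact refractive index is simply $\hat n_1(\omega)$, the remaining task is to verify that the two expressions agree as $C^1$-functions near $z=y$ and that the outgoing radiation conditions at $\pm\infty$ hold.

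For the matching, I would substitute the explicit forms of $V_1,V_2,V_3$ into \eqref{sol} on the overlap interval $(z_1,z_2)$ and collect the coefficients of the two plane waves $\e^{\pm\i\frac{\omega}{c}\hat n_1(\omega)z}$. On the left branch, the coefficient of $\e^{\i\frac{\omega}{c}\hat n_1 z}$ equals $T\alpha_0+\sum_{j=0}^\infty R_-[(R_+R_-)^jR_+T\alpha_0]$ and that of $\e^{-\i\frac{\omega}{c}\hat n_1 z}$ equals $\sum_{j=0}^\infty (R_+R_-)^jR_+T\alpha_0$; on the right branch they are, respectively, $\sum_{j=0}^\infty(R_-R_+)^jT\alpha_0$ and $\sum_{j=0}^\infty R_+[(R_-R_+)^jT\alpha_0]$. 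Matching the left-going components reduces term by term to the obvious commutation $R_+(R_-R_+)^j=(R_+R_-)^jR_+$, while matching the right-going components is the telescoping identity $T\alpha_0+\sum_{j=0}^\infty(R_-R_+)^{j+1}T\alpha_0=\sum_{j=0}^\infty(R_-R_+)^jT\alpha_0$. Both branches then reduce on $(z_1,z_2)$ to identical plane-wave superpositions with identical coefficients, so $\hat u$ and $\partial_z\hat u$ agree at $z=y$ automatically.

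Finally, I would check the outgoing conditions. On $(-\infty,z_1)$ the sum of $V_1[\hat u_0]$ and the $V_2$-terms yields $\hat u_0$ plus a pure left-going wave proportional to $\e^{-\i\frac{\omega}{c}\hat n_0 z}$, which is the required scattered-field structure; on $(z_2,\infty)$ each $V_3[\hat u^j_{0,+}]$ satisfies, by the very definition of $V_3$, the outgoing condition into the remaining $N-1$ layers encoded by $\tilde n$. The main technical obstacle is the convergence of the series $\sum_{j}(R_-R_+)^jT\alpha_0$: at each fixed $\omega$, $R_-R_+$ acts as multiplication by a scalar whose modulus is bounded by $|\hat n_1-\hat n_0|/|\hat n_1+\hat n_0|$ times the modulus of the global reflection coefficient of the stack beyond $z_2$, both strictly less than one under the passivity assumptions $\Re\hat n_j>0$, $\Im\hat n_j\geq 0$. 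Under this bound the Neumann-type series converges absolutely and all the formal rearrangements performed above are rigorous.
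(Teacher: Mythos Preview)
Your proof is correct and follows essentially the same route as the paper: both reduce the claim to showing that the two branches of \eqref{sol} agree on the overlap interval $(z_1,z_2)$ by expanding $V_1,V_2,V_3$ there and matching the coefficients of $\e^{\pm\i\frac{\omega}{c}\hat n_1(\omega)z}$ via the same reordering/telescoping identities. You go a bit further than the paper by explicitly checking the outgoing radiation conditions and justifying convergence of the Neumann-type series, which the paper leaves implicit.
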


\begin{proof}
	By construction, $\hat u$ fulfills \eqref{l32} in $(-\infty,y)$ and $(y,\infty)$. Thus, it remains to show that the two parts coincide in $(z_1,z_2).$
	Recalling the definitions of $V_1,\, V_2,$ and $V_3,$ restricted in $(z_1,z_2)$, we get 
\begin{multline*}
V_1[\hat u_0](\omega,z) +\sum_{j=0}^{\infty} V_2 [\hat u^j_{0,-}](\omega,z)  =  
    T[\alpha_0](\omega)e^{i\frac \omega c \hat n_1(\omega) z} \\
    +\sum_{j=0}^{\infty}{R_- [(R_+R_-)^j R_+T\alpha_0](\omega)}e^{i\frac \omega c \hat n_1(\omega) z} 
    +\sum_{j=0}^{\infty}{(R_+R_-)^j R_+T[\alpha_0](\omega)}e^{-i\frac \omega c \hat n_1(\omega) z}
\end{multline*}
and
\begin{align*}
	  \sum_{j=0}^{\infty}{V_3 [\hat u^j_{0,+}]}(\omega,z) &=
     \sum_{j=0}^{\infty}{(R_-R_+)^j T[\alpha_0](\omega)}e^{i\frac \omega c \hat n_1(\omega) z} \\
       &\phantom{=}+\sum_{j=0}^{\infty}{R_+ [(R_-R_+)^j T\alpha_0](\omega)}e^{-i\frac \omega c \hat n_1(\omega) z}.
\end{align*}
	We reorder the terms and we observe that they coincide in $(z_1,z_2).$
\end{proof}

\begin{remark}
If $L=(z_1,z_2)$ denotes a single-layer medium, with material parameter $\hat n$, given by
		\begin{equation*}
		\hat n(\omega,z)=\begin{cases}
		\hat n_0, & z<z_1,\\
		\hat n_1(\omega), & z\in(z_1,z_2), \\
		\hat n_2(\omega), & z>z_2,
		\end{cases}
		\end{equation*}
		then we can compute $R_+$ explicitly, and also the operator $V_3$. 
\end{remark}

\begin{example}
The amplitude of the $j$-th reflection in a certain layer is described by the term 
		\begin{equation*}
		(R_+ R_-)^j R_+ T[\alpha_0] (\omega).
		\end{equation*}
		The single reflected wave from the most left boundary of $L$ is given by \eqref{l39}. For the $k$-th layer of the medium, we obtain the back-reflected field 
		\begin{equation}\label{l43}
		\begin{aligned}
		\hat \varrho (\omega, z) &= \sum_{q=1}^{\eta_r}\Big (\frac{\hat n_k(\omega)-\hat n_{k+1}(\omega)}{\hat n_{k+1}(\omega)+\hat n_k(\omega)}\Big)^q \Big (\frac{\hat n_{k}(\omega)-\hat n_{k-1}(\omega)}{\hat n_{k-1}(\omega)+\hat n_k(\omega)}\Big)^{q-1}	\prod_{j=1}^{k}{\frac{4\hat n_{j-1}(\omega)\hat n_j(\omega)}{(\hat n_{j-1}(\omega)+\hat n_j(\omega))^2}}\\
		&\phantom{=}\times \alpha_0(\omega)e^{i\frac{\omega}{c}\left(\hat n_k(\omega)(2q z_{k+1}-2(q-1)z_k)+\sum_{l=1}^{k}{2(\hat n_{l-1}-\hat n_{l})(\omega)z_l}\right)}e^{-i\frac{\omega}{c}\hat n_0 z}, \quad z<z_1, 
		\end{aligned}
		\end{equation}
		where $\alpha_0$ is the amplitude of the incident wave $\hat u_0$.
\end{example}

The solution of \eqref{l32} in $(-\infty,z_1)$ admits the form 
\begin{equation*}
\hat u(\omega,z)=\alpha_0(\omega)e^{i\frac{\omega}{c}\hat n_0 z}+\tilde \beta(\omega)e^{-i\frac{\omega}{c}\hat n_0 z},
\end{equation*}
and we define $\tilde R(\omega):\alpha_0(\omega)\mapsto \tilde \beta(\omega)$.

\begin{lemma} 
Let the incident wave be of the form \eqref{eq_inc_freq}, and let $\tilde R[\alpha_0]$ be known. Then, the following relation holds  
	\begin{equation*}
	R_+[\tilde \alpha_0](\omega)=\tilde \beta_0(\omega),
	\end{equation*}
	for
	\begin{equation}
	\label{amp}
	\tilde \alpha_0=T[\alpha_0]+R_-\left( T_-^{-1}\left(\tilde R[\alpha_0]-R[\alpha_0]\right)\right), \quad\text{and}\quad \tilde \beta_0= T_-^{-1}\left(\tilde R[\alpha_0]-R[\alpha_0]\right),
	\end{equation}
	calculated from the previously defined operators $R_-$, $T$ and $T_-$.  
\end{lemma}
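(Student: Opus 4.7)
The plan is to mirror the proof of Proposition \ref{l27}, replacing time-domain waves by their Fourier amplitudes and using Proposition \ref{lemma01} in place of Proposition \ref{l25}. The idea is to express the amplitude of the left-going wave in $(-\infty,z_1)$ in two different ways and equate them.

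First I would evaluate the representation \eqref{sol} on $(-\infty,z_1)$. From the explicit forms of $V_1$ and $V_2$, the left-going part of $\hat u$ in this region is a single exponential $e^{-i\frac{\omega}{c}\hat n_0 z}$ whose amplitude is $R[\alpha_0] + \sum_{j=0}^\infty T_-[(R_+R_-)^j R_+ T\alpha_0]$. On the other hand, by the definition of $\tilde R$, the same amplitude equals $\tilde R[\alpha_0]$. Matching coefficients of $e^{-i\frac{\omega}{c}\hat n_0 z}$ yields
\begin{equation*}
\tilde R[\alpha_0] - R[\alpha_0] = T_-\sum_{j=0}^\infty (R_+R_-)^j R_+ T\alpha_0 = T_- (I - R_+R_-)^{-1}R_+T\alpha_0.
\end{equation*}
Using that $T_-$ is multiplication by the nowhere-vanishing Fresnel factor $\frac{2\hat n_1(\omega)}{\hat n_0 + \hat n_1(\omega)}$ together with a pure-phase exponential, I would invert $T_-$ and set
\begin{equation*}
\tilde\beta_0 := T_-^{-1}\bigl(\tilde R[\alpha_0] - R[\alpha_0]\bigr) = (I - R_+R_-)^{-1}R_+T\alpha_0.
\end{equation*}
Rewriting this identity as a fixed-point equation, $\tilde\beta_0 = R_+R_-\tilde\beta_0 + R_+T\alpha_0 = R_+\bigl(T[\alpha_0] + R_-\tilde\beta_0\bigr)$, which is exactly $R_+[\tilde\alpha_0]$ with $\tilde\alpha_0 = T[\alpha_0] + R_-\bigl(T_-^{-1}(\tilde R[\alpha_0] - R[\alpha_0])\bigr)$, matching \eqref{amp}.

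The main obstacle is the Neumann-series step, namely rewriting $\sum_{j=0}^\infty (R_+R_-)^j$ as $(I-R_+R_-)^{-1}$. This is already implicit in the convergence of the series appearing in Proposition \ref{lemma01}, which physically corresponds to the fact that every round trip inside the stack suffers transmission losses at the interfaces, so $R_+R_-$ behaves as a contraction on the relevant space of amplitude functions. Apart from this, the argument is pure bookkeeping: the operators $R$, $T$, $R_-$, $T_-$ are explicit multiplication operators in $\omega$, so all manipulations reduce to pointwise identities.
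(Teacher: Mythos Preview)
Your proposal is correct and follows essentially the same route as the paper's own proof: both match the left-going amplitude in $(-\infty,z_1)$ obtained from the representation \eqref{sol} against the definition of $\tilde R$, invert $T_-$, and then rewrite the resulting Neumann-series identity $(I-R_+R_-)\tilde\beta_0 = R_+T[\alpha_0]$ in the form $R_+[\tilde\alpha_0]=\tilde\beta_0$. Your write-up is in fact slightly more explicit than the paper's in spelling out the final rearrangement, but the argument is the same.
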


\begin{proof}
	We know that in $(-\infty,z_1)$, 
	\begin{equation*}
	\alpha_0(\omega)e^{i\frac{\omega}{c}\hat n_0 z}+\tilde R[\alpha_0](\omega) e^{-i\frac{\omega}{c}\hat n_0 z}= V_1[\hat u_0](\omega,z)+\sum_{j=0}^{\infty}{V_2 [\hat u^j_{0,-}]}(\omega,z),
	\end{equation*}
for $\hat u^j_{0,-},$ defined as in Proposition \ref{lemma01}.
	Using the definitions of $V_1$ and $V_2,$ we get
	\begin{equation*}
\tilde R[\alpha_0](\omega) e^{-i\frac{\omega}{c}\hat n_0 z} -R[\alpha_0](\omega) e^{-i\frac{\omega}{c}\hat n_0  z}=
	T_- \left(\sum_{j=0}^{\infty}{ (R_+R_-)^jR_+T[\alpha_0]}\right )(\omega) e^{-i\frac{\omega}{c}\hat n_0 z}.
	\end{equation*}
This results to 	
	\begin{equation*}
     T_-^{-1} \left(\tilde R[\alpha_0] -R[\alpha_0] \right)=\sum_{j=0}^{\infty}{ (R_+R_-)^jR_+T[\alpha_0]},
	\end{equation*}
which is equivalent to
	\begin{equation*}
	(1-R_+R_-)\left( T_-^{-1} \left(\tilde R[\alpha_0] -R[\alpha_0] \right)\right)=R_+T[\alpha_0]. 
	\end{equation*}
	This completes the proof.
\end{proof}

The amplitudes $\tilde \alpha_0$ and $\tilde \beta_0$, defined in \eqref{amp}, correspond to the amplitudes of the Fourier transforms of $\tilde f$ and $\tilde g,$ given in Proposition \ref{l27}. Furthermore, one can derive an analogue of Lemma \ref{lem25} also for a dispersive medium.

\section{The inverse problem}\label{sec3}

 We address the inverse problem of recovering the position, the size and the optical properties of a multi-layer medium with piece-wise inhomogeneous refractive index. We identify the position by the distance from the detector to the most left boundary of the medium, and the size by reconstructing the constant refractive index $n_0$ of the background medium. Initially, we discuss the problem of phase retrieval and possible directions to overcome it and then we present reconstruction methods for non-dispersive and dispersive media. We end this section by giving a method, which with the use of the Kramers-Kronig relations, makes the reconstruction of a complex-valued refractive index (absorbing medium) possible. Let $z= z_d$ denote the position of the point detector. 
 
 \subsection{Phase retrieval and OCT}\label{subsec_phase}

The phase retrieval problem, meaning the reconstruction of a function from the magnitude of its Fourier transform, has attracted much attention in the optical imaging community, see \cite{SheEldCOhCha15} for an overview. When dealing with experimental data, additional problems arise, like different types of noise and incomplete data. Mathematically speaking, the problem corresponds to a least squares minimization problem for a non-convex functional. In our case, where we are given one-dimensional data of the form \eqref{eq_fourier_data} or \eqref{eq_fourier_data2}, unique reconstruction of the phase is not possible \cite{Hof64}. However, there exist convergent algorithms that produce satisfactory results under some assumptions on the signal, like bounded support and non-negativity constraints. These algorithms are alternating between time and frequency domains, using usually  less coefficients than samples,  which makes the exact recovery almost impossible. 

In OCT, this problem has been also well studied, see for example \cite{LeiHitFerBaj03, MukSee12, SeeVilLeiUns08}. The main idea is either to consider a phase-shifting device in the reference arm or to combine OCT with holographic techniques. The first case, the one we consider here, produces different measurements by placing the mirror at different positions, meaning by changing the path-length difference between the two arms.

As already discussed in Sec. \ref{sec1}, we have measurements of the form
\begin{equation*}
\hat m (r; \omega ) = | (\hat u - \hat u_0) (\omega, z_d)  +  (\hat u_r - \hat u_0) (\omega, z_d) |, \quad  \omega\in\R,
\end{equation*}
for $r$ fixed, where $u_r$ denotes the $y-$component of the reference field $E_r,$ and we also acquire the data
\begin{equation*}
\hat m_s (\omega ) = | (\hat u - \hat u_0) (\omega, z_d) |, \quad  \omega\in \R.
\end{equation*}
Since we know the incident field $\hat u_0$ explicitly, we can also compute the reference field $\hat u_r - \hat u_0$ at the point detector. Then, the problem of phase retrieval we address here is to recover $\hat u - \hat u_0$ from the knowledge of $\hat m$ and $\hat m_s$ for all $\omega\in\R. $ We know, from \cite{KliKam14, KliSacTik95}, that if $u_r - u_0$ is compactly supported, then there exist at most two solutions $u-u_0$. See the left picture of Fig. \ref{fig_phase}, where we visualize graphically the two solutions by plotting in the complex plane the two above equations at specific frequency for the setup of the third example presented later in Sec. \ref{sec4}.

If in addition, there exist a constant $\gamma \in [0,1),$ such that
\[
| \Re \{\hat u_r - \hat u_0 \} | \leq \gamma | \Im \{\hat u_r - \hat u_0 \} |,
\]
then, there exists  at  most one  solution  in  $L^2(\R)$ with  compact  support  in  $[0,\infty ).$  However, it is hard to verify that the reference field fulfills this condition and we observed that, in all numerical examples, 
this inequality does not hold, for an incident plane wave.  Thus, in order to decide which solution of the two is correct, extra information is needed. Motivated by the phase-shifting procedure, we consider data for two different positions of the mirror, let us say $r=r_1, r_2.$ Then, we get the data
\begin{equation*}
\hat m (r_1 ; \omega ), \quad  \hat m (r_2 ; \omega ), \quad \mbox{and} \quad \hat m_s (\omega ) .
\end{equation*}
 Using $\hat m (r_1 ; \cdot )$ and $\hat m_s,$ we get two possible solutions, and from $\hat m (r_2 ; \cdot )$ and $\hat m_s,$ other two. But since $\hat m_s$ is the same in both cases, we find the unique solution as the common solution  of the two pairs. This is illustrated at the right picture of Fig. \ref{fig_phase}, where we plot the three above relations at two different frequencies. This way, we get unique solutions at every available frequency. 
 
 Thus, having measurements for two different positions of the reference mirror, we may consider that frequency-domain OCT provide us with the quantity 
 \begin{equation}
 \label{freqm}
( \hat u - \hat u_0) (\omega, z_d) , \quad  \omega\in \R,
 \end{equation}
 the equivalent measurements of a time-domain OCT system, see \eqref{data_oct_time}.
 
 \begin{figure}[t]
    \centering
  \includegraphics[width=0.9\textwidth]{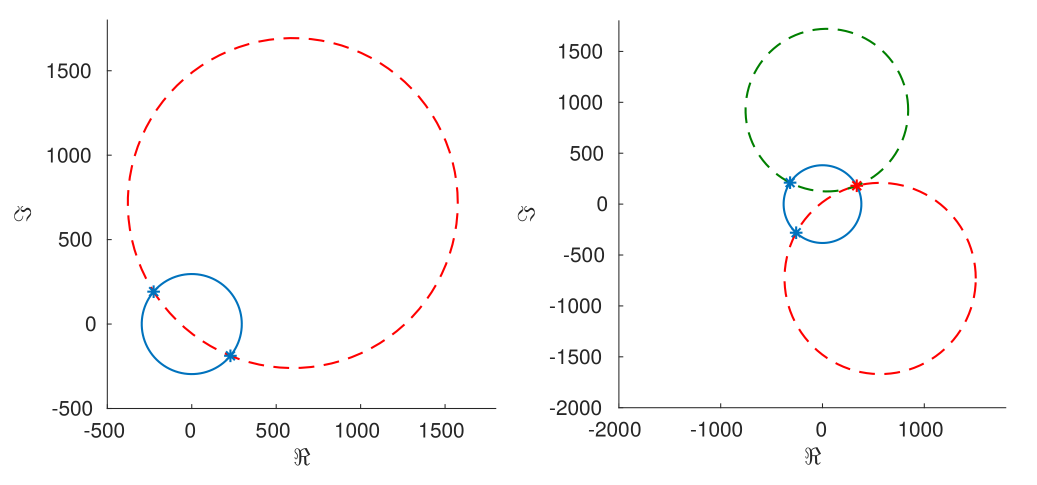}
    \caption{ Left: The intersection points of the  circles $\hat m (r_1 ; \omega_1) \in \C$ (red) and $\hat m_s (\omega_1) \in \C$ (blue). Right: The intersection points of the  circles $\hat m (r_1 ; \omega_2) \in \C$ (red),  $\hat m (r_2 ; \omega_2) \in \C$ (green) and $\hat m_s (\omega_2) \in \C$ (blue). The red asterisk indicates the unique solution. The setup is the same as the one in the third example in Sec. \ref{sec4}.
        }\label{fig_phase}
\end{figure}

\subsection{Reconstructions in time domain}\label{subsec_time}
We consider initially a non-dispersive medium.  Then, the refractive index is given by \eqref{eq_refract} and the  time-dependent OCT data admit the form
\begin{equation}
\label{l48}
 m(t,z_d) =u(t,z_d)-f_0(z_d-c_0t), \quad t\in[0,T],
\end{equation} 
for $T>0.$ Here, we assume that the initial wave (known explicitly) does not contribute to the measurements. The following presented algorithms are based on a layer-by-layer procedure. At the first step, we do reconstruct the parameters for a given layer and then we update the data, to be used for the next layer.
Thus, we assume that we have already recovered the boundary point $z_{k-1}$ and the coefficient $c_{k-1}$ of the layer $L_{k-1}.$ We denote by $m^{(k)} (t, z_d),$ the data corresponding to a $(N-k+1)-$layer medium, with the most left layer being the $L_{k}.$ 

As we see in Fig. \ref{fig_real}, the (time-dependent) data consist mainly of $N+1$ major $``$peaks$"$, for a $N-$layer medium, and some minor $``$peaks$"$,  related to the light undergoing multiple-reflections in the medium. The experimental data are, of course, also noisy and may show some small $``$peaks$"$ because of the OCT system. The first two $``$peaks$"$ correspond, for sure, to the single-reflected light from the first two boundaries. We propose a scheme to neglect minor $``$peaks$"$ due to multiple reflected light. Then, the first major $``$peak$"$ in $m^{(k)},$  corresponds to the back-reflected light from the interface at $z_k$. 

\begin{description}
\item[Step 1:] We isolate the first $``$peak$"$ by cutting off around a certain time interval $[T_1,T_2],$ meaning we consider
\begin{equation*}
\tilde{m}^{(k)} (t,z_d) =\b_{[T_1,T_2]}(t) m^{(k)}(t,z_d). 
\end{equation*}
The time interval can be fixed for all layers and depends on the time support of the initial wave.  On the other hand, this wave can be described by \eqref{multi}, if we use $\eta_r =1$ and $k:=k-1.$ Therefore, we obtain the equation
\begin{equation}
\label{mk}
\tilde{m}^{(k)} (t,z_d)=\frac{c_{k}-c_{k-1}}{c_{k}+c_{k-1}}\tilde f_0 \left(2z_k\frac{c_0}{c_{k-1}}-(z_d+c_0t) \right), \quad t\in\R,
\end{equation}
with $\tilde f_0$ given by
\begin{equation*}
\begin{aligned}
\tilde f_0 (z) &=\prod_{j=1}^{k-1}{\frac{4c_{j-1}c_j}{({c_j}+c_{j-1})^2}} 
f_0\left( \left(\sum_{j=1}^{k-2}{2z_j(1-\frac{c_{j-1}}{c_{j}})\prod_{l=1}^{j-1}{\frac{c_{l-1}}{c_{l}}}} \right.\right.\\
&\phantom{=}\left.\left.+z_{k-1}(2-2\frac{c_{k-2}}{c_{k-1}})\prod_{l=1}^{k-2}{\frac{c_{l-1}}{c_{l}}}\right)+z\right).
\end{aligned}
\end{equation*}
The supremum of \eqref{mk}, using its shift-invariance property, gives the value of $c_k$. The position of the interface $z_k$ can then be recovered from \eqref{mk}, by solving the minimization problem
\begin{equation*}
\min_{z\in \R}\Big \vert\tilde{m}^{(k)}(t,z_d)-\frac{c_k-c_{k-1}}{c_k+c_{k-1}}\tilde f_0(2z\frac{c_0}{c_{k-1}}-(z_d+c_0t)) \Big \vert, \quad \forall t \in  \R.
\end{equation*}
Since both parameters are time-independent, there exist also other variants for solving this overdetermined problem. 

\item[Step 2:]  Before moving to the layer $L_{k+1}$, we have to update the data function. We could just remove the contribution of the current layer, meaning $\tilde{m}^{(k)}.$
However, the first $``$peak$"$ might not correspond to the reflection from $z_k,$ for $k>1,$ but to contributions of multiple reflected wave from previous layers, arriving at the detector before the major wave. Since, we have recovered the properties of $L_k,$ we can compute all future multiple reflections from this layer using \eqref{multi},  let us call them $\mathcal{R}[c_k, \, z_k].$ 

Then, we update the data as
 \begin{equation*}
m^{(k+1)} =m^{(k)} -\tilde{m}^{(k)} - \mathcal{R}[c_k, \, z_k].
\end{equation*} 
\end{description}

Repeating these steps, we end up with the following result.

\begin{lemma}
	\label{l50}
	Let $L$ be a multi-layer medium, with $N\in \N$ layers, characterized by  $n,$ given by \eqref{eq_refract}. Then from the knowledge of $n_0$, the initial wave $f_0,$ and the measurement data \eqref{l48}, following the above iterative scheme, we can uniquely reconstruct $n_j$ and $z_j$ for $j =1,...,N+1.$   
\end{lemma}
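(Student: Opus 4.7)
The plan is to prove the claim by induction on the layer index $k$, showing that the iterative scheme correctly recovers $(c_k,z_k)$ at the $k$-th step and that the updated signal $m^{(k+1)}$ coincides with the measurement that would be produced by the truncated medium consisting only of the layers $L_{k+1},\ldots,L_N$ lying behind the interface at $z_{k+1}$. Since $n_j = c/c_j$ is determined by $c_j$, recovering the speeds and interface positions is equivalent to the claim.

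For the base case $k=1$ we have $m^{(1)}=m$. Since $\operatorname{supp} f_0\subset(-\infty,z_1)$, causality ensures that the earliest nontrivial contribution to $m(t,z_d)$ is precisely the single-reflection term from \eqref{multi} with $\eta_r=1$ and $k=1$, namely $\frac{c_1-c_0}{c_1+c_0}\,f_0(2z_1-(z_d+c_0 t))$. Isolating this arrival over a window $[T_1,T_2]$ chosen large enough to contain the full pulse but small enough to exclude the reflection from $z_2$, the supremum of $|\tilde m^{(1)}|$ equals $\tfrac{|c_1-c_0|}{c_1+c_0}\|f_0\|_\infty$; since $c_0$ is known and the sign of the reflection coefficient is read off by comparing the waveform of $\tilde m^{(1)}$ with that of $f_0$, this determines $c_1$ uniquely. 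Plugging the recovered $c_1$ into \eqref{mk} and minimizing over $z\in\R$ then yields $z_1$ uniquely from the shift of the pulse.

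For the inductive step, assume $(c_j,z_j)$ have been correctly identified for $j\le k$ and that $m^{(k)}$ equals the measurement of the medium whose leftmost layer is $L_k$. The same causality argument shows that the first peak of $m^{(k)}$ is the single reflection from $z_k$, given by \eqref{mk}; Step 1 recovers $(c_k,z_k)$ exactly as in the base case, using the product factor $\prod_{j=1}^{k-1}\frac{4c_{j-1}c_j}{(c_j+c_{j-1})^2}$ (which is now computable from the induction hypothesis) to correctly read off the reflection coefficient. In Step 2, the term $\mathcal R[c_k,z_k]$ collects, via \eqref{multi} with $\eta_r\ge 2$, every wave that has undergone at least two reflections between the boundaries $z_1,\ldots,z_{k+1}$; all of its coefficients and phases depend only on the already-reconstructed parameters, hence $\mathcal R[c_k,z_k]$ is computable. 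Subtracting both $\tilde m^{(k)}$ and $\mathcal R[c_k,z_k]$ from $m^{(k)}$ removes from the measured signal exactly the contributions attributable to the layers $L_1,\ldots,L_k$, leaving by linearity the response of the medium $L_{k+1}\cup\cdots\cup L_N$ illuminated by the transmitted pulse, which is $m^{(k+1)}$. Iterating $N$ times yields all $(c_j,z_j)$, and the final iteration (in which the data becomes negligible) provides $z_{N+1}$ as the last identified interface.

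The main obstacle, and the only nontrivial assumption implicit in the scheme, is a temporal-separation hypothesis: for Step 1 to isolate the leading peak one needs the round-trip time $2(z_{k+1}-z_k)/c_k$ in every layer to strictly exceed the temporal width of the pulse $\tilde f_0$ at the detector, so that a window $[T_1,T_2]$ exists which contains the reflection from $z_k$ without overlap from either the next single reflection at $z_{k+1}$ or from multi-reflected contributions of earlier layers that may arrive in the same time frame. Under this well-separation condition the inductive step goes through verbatim and the reconstruction is unique; without it, deconvolution of overlapping arrivals would be required and the layer-by-layer peeling would no longer be well-defined.
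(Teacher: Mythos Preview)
Your argument follows the same layer-by-layer scheme the paper describes; in fact the paper gives no separate proof of this lemma at all---it simply states the result as the outcome of repeating Steps~1 and~2, so your inductive write-up is already more explicit than what appears there. One small indexing slip: in the inductive hypothesis you assume $(c_j,z_j)$ are known for $j\le k$ and then set out to recover $(c_k,z_k)$ from $m^{(k)}$; the hypothesis should read $j\le k-1$, consistent with the paper's convention that the first peak of $m^{(k)}$ yields $c_k$ and $z_k$ via \eqref{mk}. Your closing remark on the temporal-separation requirement is well taken and is exactly the tacit assumption behind the paper's phrase ``we isolate the first peak by cutting off around a certain time interval''; the paper does not make this hypothesis explicit either.
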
  

The above scheme can be written in an operator form, by the application of Propositions \ref{l25} and \ref{l27}.  For the sake of presentation, we consider the case of the first layer in order to avoid redefining all operators. 
 
 \begin{description}
 \item[Step 1:] Recall the definition of the operator $\tilde R$,  applied to the initial  wave $f_0,$ which describes the total amount of the reflected light. Considering the data \eqref{l48}, we get
\begin{equation*}
m (t,z_d)=\tilde R [f_0](z_d+c_0t) , \quad t\in [0,T].
\end{equation*} 
Following the same procedure, using \eqref{l7}, we can recover $c_1$ and $z_1,$ from the reduced data equation
\[
 m^{(1)} (t,z_d)= R [f_0](z_d+c_0t).
\] 

 \item[Step 2:]  We update all operators and from Proposition \ref{l27}, we obtain $\tilde f$ and $\tilde g$, given by \eqref{l28}. From the definition of $\tilde g$, we see that we obtain the function
	\begin{equation*}
	m^{(2)}(t,z_d) = m (t,z_d)-  m^{(1)}(t,z_d),
	\end{equation*} 
describing the updated data.  The advantage here is that we do not need to subtract the multiple reflections term, since they are already included in the updated version of $\tilde g.$

 \end{description}

\subsection{Reconstructions in frequency domain}\label{subsec_dis}

We consider the case of a dispersive medium, with a piece-wise inhomogeneous refractive index  $\hat n(\omega), $ for $\omega\in \R.$  Initially, we assume $\hat n \in \R.$ The derived iterative scheme can be applied also to the simpler non-dispersive case, giving a reconstruction method in the frequency domain for a time-independent parameter. 

\subsubsection{Dispersive medium}

The refractive index is given by \eqref{rei} and initially, we restrict ourselves to the case of real-valued $\hat n.$ The data $\hat m$ are given by \eqref{freqm}. As before, we present a layer-by-layer scheme. We assume that  the boundary point $z_{k-1}$ and the coefficient $\hat n_{k-1}$ of the layer $L_{k-1}$ are already recovered. We denote by $\hat m^{(k)}(\omega, z_d)$, the data corresponding to the $(N-k+1)-$layer medium.  Here, we need to choose the time interval $\b_{[T_1,T_2]}$ wisely, compared to Sec. \ref{subsec_time}, because of the presence of dispersion. However, this results only to slightly longer time interval, since in the wavelength range, where OCT operates, scattering dominates absorption.

\begin{description}
	\item[Step 1:] As we have seen in Fig. \ref{fig_real}, for example,  from the data 
$ \hat m^{(k)} $ we cannot distinguish the different $``$peaks$".$ Thus, we have to switch back to the time domain in order to isolate the first $``$peak$".$  We apply 
	\begin{equation*}
	\tilde{\hat{m}}^{(k)}(\omega)=\mathcal F \left (\b_{[T_1,T_2]}\mathcal{F}^{-1}(\hat m^{(k)})\right )(\omega), \quad \omega\in \R.
	\end{equation*}
	We use \eqref{l43} for $k:=k-1$ and $\eta_r=1$ to get 
	\begin{equation}
	\label{m_new}
	\tilde{\hat{m}}^{(k)}(\omega)=\frac{\hat n_{k-1}(\omega)-\hat n_{k}(\omega)}{\hat n_{k-1}(\omega)+\hat n_{k}(\omega)} \tilde \alpha_0(\omega) e^{i\frac{\omega}{c}\hat n_{k-1}(\omega)2 z_k}e^{-i\frac{\omega}{c}\hat n_0 z_d},
	\end{equation}	
	with
	\begin{equation*}
	\tilde \alpha_0(\omega)=\prod_{j=1}^{k-1}{\frac{4\hat n_{j-1}(\omega) \hat n_j (\omega)}{(\hat n_{j-1}(\omega)+\hat n_j (\omega))^2}} \alpha_0(\omega)e^{i\frac{\omega}{c}\sum_{l=1}^{k-1}{2(\hat n_{l-1}-\hat n_{l})(\omega)z_l}},
	\end{equation*}
	describing an already known quantity. The absolute value of \eqref{m_new} gives 
	\begin{equation*}
	\hat n_k(\omega)=\hat n_{k-1} (\omega) \left(\frac{1+ \vert \tilde{\hat{m}}^{(k)}(\omega) / \tilde \alpha_0(\omega)\vert}{1-\vert \tilde{\hat{m}}^{(k)}(\omega) / \tilde \alpha_0(\omega)\vert}\right)^{\pm 1}, 
	\end{equation*}
	which together with a suitable condition on $\hat n_k$ allow us to recover the refractive index.
	
To reconstruct the position $z_k$ we define the function
\[
f_k (\omega) = \frac{\tilde{\hat{m}}^{(k)}(\omega) / \tilde \alpha_0(\omega)}{\vert \tilde{\hat{m}}^{(k)}(\omega) / \tilde \alpha_0(\omega)\vert} \sign (\hat n_{k-1}(\omega)-\hat n_{k}(\omega)),
\]
and we observe that the absolute value of its derivative, together with \eqref{m_new}, results to
 	\begin{equation*}
	c \vert f_k'(\omega)\vert  = \vert  2 z_k (\hat n_{k-1}(\omega) + \omega \hat n'_{k-1}(\omega) ) - \hat n_0 z_d \vert.
	\end{equation*} 
	Together with a non-negativity constrain, we obtain $z_k.$
	\item[Step 2:]  As in the time domain case, we update the data by subtracting \eqref{m_new} and the terms representing the multiple reflections from the already recovered layers, called $\mathcal{R}[\hat n_k,z_k].$ We define 
	\begin{equation*}
	\hat{m}^{(k+1)}=\hat{m}^{(k)} -\tilde{\hat{m}}^{(k)}- \mathcal{R}[\hat n_k,z_k].
	\end{equation*} 
\end{description}  

Repeating the steps, a reconstruction of the properties and the lengths of all the remaining layers is obtained. 

\begin{lemma}
	Let $L$ be a multi-layer medium, with $N\in \N$ layers, characterized by $\hat n$, given by the Fourier transform of \eqref{rei}. If we restrict ourselves to the case $\hat n(\omega)\in \R,$ for all $\omega\in\R$, then the above iterative scheme, allows us to uniquely reconstruct $\hat n_j$ and $z_j$ for $j =1,...,N+1,$ given $\hat n_0$, the incident wave $\hat u_0$ and the measurement data \eqref{freqm}. 
	\end{lemma}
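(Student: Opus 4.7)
The proof will proceed by induction on the layer index $k$, mirroring the two-step structure of the iterative scheme and invoking the forward formulas already established in Section \ref{sec2}, in particular Proposition \ref{lemma01} and the representation \eqref{l43} of the single-reflection contribution from the $k$-th interface. The essential point is to verify that, at each stage, the windowed, deflated data $\tilde{\hat m}^{(k)}$ reduces exactly to the right-hand side of \eqref{m_new}, so that Step 1 becomes a pointwise (in $\omega$) algebraic inversion and Step 2 is a deterministic update.

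For the base case $k=1$, assumption \eqref{freqm} together with Proposition \ref{lemma01} (applied for $y\in(z_1,z_2)$ and restricted to $z=z_d<z_1$) identifies the measured field $\hat m(\omega,z_d)$ with $\tilde R[\alpha_0](\omega)e^{-\i\frac{\omega}{c}\hat n_0 z_d}$, whose temporal inverse Fourier transform consists of the single-reflection peak from $z_1$ followed by the contributions of the deeper layers and of the multiple reflections in $L_1$. The time window $\b_{[T_1,T_2]}$ is chosen so that $\tilde{\hat m}^{(1)}$ retains only the leading peak, which by the $j=0$ term of \eqref{l43} (with $k:=1,\eta_r=1$) has exactly the form \eqref{m_new}. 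Since $\alpha_0$, $\hat n_0$ and $z_d$ are known, this determines $\hat n_1(\omega)$ from the real part formula whenever the denominator does not vanish (which, under the assumption $\Re\hat n>0$, is satisfied), and then determines $z_1$ through the derivative identity for $f_1$ together with the positivity of $z_1$.

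For the inductive step, suppose $\hat n_j, z_j$ are known for $j=1,\dots,k-1$. By the $N{-}k{+}1$-layer analogue of Proposition \ref{lemma01}, the difference $\hat m^{(k)}$ produced by Step 2 is exactly the field that would be measured if the medium consisted only of the layers $L_k,\dots,L_N$, since subtracting both $\tilde{\hat m}^{(k-1)}$ and the feedback term $\mathcal R[\hat n_{k-1},z_{k-1}]$ removes the single- and multiple-reflection contributions already accounted for. Time-windowing then isolates the first reflection from $z_k$, which once again matches \eqref{m_new} with $k$ in place of $k-1$ and a known prefactor $\tilde\alpha_0$ depending only on the already reconstructed quantities. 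Applying the algebraic inversion and the derivative-based localization recovers $\hat n_k(\omega)$ and $z_k$ uniquely.

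The main obstacle is the justification of the time-window step: one has to verify that the interval $[T_1,T_2]$ can be chosen so that exactly the single back-reflection from the current interface is captured and that the tails of the later reflections (and of the multiple reflections within the reconstructed layers) do not intrude. In the dispersive regime this is more delicate than in Section \ref{subsec_time}, since the pulse shape broadens with propagation; one has to control the temporal support of $\mathcal F^{-1}(\tilde\alpha_0\,e^{\i\frac{\omega}{c}\hat n_{k-1}(\omega)2z_k})$ in terms of the bandwidth of $\hat u_0$ and the modulus of continuity of $\omega\mapsto\omega\hat n_{k-1}(\omega)$. A secondary, but routine, point is the unique solvability of the derivative equation for $z_k$: differentiating the phase of $f_k$ in \eqref{m_new} gives a linear equation in $z_k$ after fixing the sign with the non-negativity constraint $z_k>z_{k-1}$, so uniqueness follows provided $\hat n_{k-1}+\omega\hat n'_{k-1}$ does not vanish identically, which is guaranteed by $\hat n_{k-1}>0$ on an interval of positive measure. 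Iterating the argument for $k=1,\dots,N$ and, in the final step, reading off $\hat n_0$ from the known background and $z_{N+1}$ from the last isolated peak completes the reconstruction.
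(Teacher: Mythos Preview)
Your proposal is essentially correct and follows the same route as the paper. In fact, the paper does not give a separate proof of this lemma at all: it is stated as a summary of the two-step iterative scheme described immediately before it, with the sentence ``Repeating the steps, a reconstruction of the properties and the lengths of all the remaining layers is obtained'' serving as the entire justification. Your inductive write-up is a faithful formalisation of that scheme, invoking exactly the ingredients the paper relies on (Proposition~\ref{lemma01} and the single-reflection representation~\eqref{l43} specialised to $\eta_r=1$), and your Step~1/Step~2 structure matches the paper's verbatim.

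One remark: you correctly flag the time-windowing as the genuine obstacle and sketch what would be needed to make it rigorous (control of the temporal support of $\mathcal F^{-1}(\tilde\alpha_0\,e^{\i\frac{\omega}{c}\hat n_{k-1}(\omega)2z_k})$ via the bandwidth of $\hat u_0$ and the regularity of $\omega\mapsto\omega\hat n_{k-1}(\omega)$). The paper itself does not carry this out either; it only remarks that the interval $[T_1,T_2]$ must be chosen ``wisely'' and that in the OCT regime the required enlargement is slight. So on this point you are not missing anything relative to the paper --- you are simply being more explicit about an assumption that the paper leaves at the heuristic level.
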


\subsubsection{Absorbing medium}
Here, we consider the case of a complex-valued material parameter, $\hat n(\omega)\in \C,$ for every $\omega\in \R.$ The real part describes how the medium reflects the light and the imaginary part (wavelength dependent) determines how the light absorbs in the medium. In \cite{ElbMinSch17, ElbMinSch18} we considered the multi-modal PAT/OCT system, meaning that we had additional internal data from PAT, in order to recover both parts of the refractive index. Here, the multi-layer structure allows us to derive an iterative method that requires only OCT data.  We decompose $\hat n$ as
\[
\hat n(\omega) = \nu (\omega) + i \kappa (\omega), \quad \nu, \, \kappa \in \R.
\]
The measurements are again given by \eqref{freqm}. The above presented iterative scheme, fails in this case.  Indeed, recall the formula \eqref{m_new}, which we considered for recovering the parameters of the $k-$th layer.  Taking the absolute value, for $\hat n_{k-1} (\omega), \, \hat n_k (\omega) \in \C,$ gives
\[
|\tilde{\hat{m}}^{(k)}(\omega)| = \frac{\vert \hat n_{k-1}(\omega)-\hat n_k(\omega)\vert}{\vert \hat n_{k-1}(\omega)+\hat n_k (\omega) \vert} \vert  \tilde \alpha_0(\omega)\vert e^{-\frac{\omega}c \kappa_{k-1} (\omega)2 z_k}.
\]
The last term in the above  expression, describes how the amplitude of the wave decreases, i.e. attenuation, and prevents us from a step-by-step solution, since $z_k$ still appears. Thus, we propose a different scheme that takes into account also the relations between the parts $\nu$ and $\kappa,$ meaning the Kramers-Kronig relations. We stress here that $\hat n$ is holomorphic in the upper complex plane, satisfying $\hat n (\omega) = \hat n(-\omega)^\ast. $ The parts of the complex-valued refractive index are connected through
\begin{equation}\label{eq_kk}
\begin{aligned}
\nu (\omega) -1 &=\frac{2}{\pi}\int_{0}^{\infty}\frac{\omega' \kappa (\omega')}{\omega'^2-\omega^2}d\omega', \\
\kappa (\omega)  &=-\frac{2\omega }{\pi}\int_{0}^{\infty}\frac{\nu (\omega') -1}{\omega'^2-\omega^2}d\omega'. \\
\end{aligned}
\end{equation}

In addition, defining the reflection coefficient,
\begin{equation}\label{eq_refle}
\rho_k (\omega) = \frac{\hat n_{k-1}(\omega)-\hat n_{k}(\omega)}{\hat n_{k-1}(\omega)+\hat n_{k}(\omega)} \in \C,
\end{equation}
and using its expression in polar coordinates $\rho_k = |\rho_k | e^{i \theta_k},$ we obtain 
\[
\ln (\rho_k (\omega)) = \ln (|\rho_k (\omega)|) + i \theta_k (\omega),
\]
a function that diverges logarithmically as $\omega \rightarrow \infty,$ and is not square-integrable \cite{LucSaaPeiVar05}. However, the following relation holds for the phase of the complex-valued reflectivity
\begin{equation}\label{eq_kk2}
\theta_k (\omega)  =-\frac{2\omega}{\pi}\int_{0}^{\infty}\frac{\ln (|\rho_k (\omega')|)}{\omega'^2-\omega^2}d\omega' .
\end{equation}
We define the operator
\[
\mathcal{H}(\omega) : f \mapsto  -\frac{2\omega}{\pi}\int_0^\infty \frac{f (\omega')}{\omega'^2-\omega^2}d\omega' ,
\]
and we get the relations in compact form
\begin{equation*}
\kappa_k (\omega) = \mathcal{H}(\nu_k -1 ) (\omega) , \quad \mbox{and} \quad \theta_k (\omega)  = \mathcal{H}(\ln |\rho_k |) (\omega).
\end{equation*}

Of course, when working with the Kramers-Kronig relations \eqref{eq_kk} and \eqref{eq_kk2}, one has to deal with the problem that they assume that information is available for the whole spectrum, something that is not true for experimental data. Another problem could be the existence of zero's of the reflection coefficient in the half plane. There exist generalizations of there formulas that can overcome these problems, like the subtractive relations which require few additional data. We refer to \cite{GroOff91, HorArtLa15, PalWilBud98, You77} for works dealing with the applicability  and variants of the Kramers-Kronig relations. This practical problem is  out of the scope of this paper and will be considered in future work, where we will examine numerically, with simulated and real data, the validity of the proposed scheme.

\begin{description}
\item[Step 1:] At first, we consider the reconstruction of the interface $z_k.$ We apply once the logarithm to the absolute value of \eqref{m_new} and then we take imaginary part of the logarithm of \eqref{m_new}. 
Using the definition \eqref{eq_refle}, we obtain the system of equations
\begin{equation}\label{eq_system1}
\begin{aligned}
\ln(\vert \tilde{\hat{m}}^{(k)}(\omega) \vert) &= \ln (\vert \rho_k (\omega)\vert) + \ln (\tilde \alpha_0(\omega)) -2\tfrac{\omega}{c} \kappa_{k-1} (\omega) z_k,\\
\Im \{\ln( \tilde{\hat m}^{(k)}(\omega))\} &= \theta_k (\omega)-\tfrac{\omega}{c} \hat n_0 z_d +2\tfrac{\omega}{c}\nu_{k-1} (\omega) z_k .
\end{aligned}
\end{equation}
We define the data functions
\begin{align*}
\hat m^{(k)}_1 (\omega)  &:= \ln(\vert \tilde{\hat{m}}^{(k)}(\omega) \vert) - \ln (\tilde \alpha_0(\omega)), \\ \hat m^{(k)}_2 (\omega) &:= \Im \{\ln( \tilde{\hat m}^{(k)}(\omega))\} +\tfrac{\omega}{c} \hat n_0 z_d,
\end{align*}
and the system \eqref{eq_system1} takes the form
\begin{equation*}
\begin{aligned}
\ln (\vert \rho_k (\omega)\vert) -2\tfrac{\omega}{c} \kappa_{k-1} (\omega) z_k &= \hat m^{(k)}_1 (\omega),\\
\theta_k (\omega) +2\tfrac{\omega}{c}(\nu_{k-1} (\omega) -1) z_k  + 2\tfrac{\omega}{c} z_k &= \hat m^{(k)}_2 (\omega),
\end{aligned}
\end{equation*}
to be solved for $z_k.$ We rewrite the last equation using the formulas \eqref{eq_kk} and \eqref{eq_kk2} and the first equation, to obtain
\begin{equation*}
\begin{aligned}
\hat m^{(k)}_2 (\omega) &= 2\tfrac{\omega}{c} z_k  -\frac{2\omega}{\pi}\int_{0}^{\infty}\frac{\ln (|\rho_k (\omega')|)}{\omega'^2-\omega^2}d\omega'  +\frac{4\omega z_k}{\pi c}  \int_{0}^{\infty}\frac{\omega' \kappa_{k-1} (\omega')}{\omega'^2-\omega^2}d\omega'  \\
&= 2\tfrac{\omega}{c} z_k - \frac{2\omega}{\pi}\int_{0}^{\infty} \frac{\ln (\vert \rho_k (\omega')\vert) -2\tfrac{\omega'}{c} \kappa_{k-1} (\omega') z_k}{\omega'^2-\omega^2} d\omega' \\
&= 2\tfrac{\omega}{c} z_k + \mathcal{H} (\hat m_1^{(k)}) (\omega).
\end{aligned}
\end{equation*}
The last equation is solved at given frequency $\omega^\ast \neq 0,$ in order to obtain  the location of the interface
\[
z_k = \frac{c}{2\omega^\ast} \left(\hat m^{(k)}_2 (\omega^\ast) - \mathcal{H} (\hat m_1^{(k)}) (\omega^\ast) \right).
\]
We can now recover $\hat n_k$ from \eqref{m_new} which admits the from
\[
\hat n_{k}(\omega) = \hat n_{k-1}(\omega) \frac{\tilde \alpha_0(\omega) e^{i\frac{\omega}{c}\left( \hat n_{k-1}(\omega)2 z_k -\hat n_0 z_d \right)} -\tilde{\hat{m}}^{(k)}(\omega) }{\tilde \alpha_0(\omega) e^{i\frac{\omega}{c}\left( \hat n_{k-1}(\omega)2 z_k -\hat n_0 z_d \right)} +\tilde{\hat{m}}^{(k)}(\omega)},
\]
by equating the real and imaginary parts. 
\item[Step 2:] We update the data as in the non-absorbing case.

\end{description}

\section{Numerical Implementation}\label{sec4}

We solve the direct problem considering two different schemes depending on the properties of the refractive index, see \eqref{rei} and \eqref{eq_refract}. First, we consider the time-independent refractive index and phase-less data from a time-domain OCT system. 

\subsection{Reconstructions from phase-less data in time domain}

We model the incident field as a gaussian wave centered around a frequency $\omega_0$ moving in the $z$-direction of the form
\begin{equation}\label{field_inc}
u_0 (t,z) = e^{-\frac{(z-z_0 -ct)^2}{2\sigma^2}} \cos \left(\frac{\omega_0}{c} (z-z_0 - ct) \right),
\end{equation}
with width $\sigma,$ where $z_0$ denotes the source position and $c \approx 3\times 10^8$m/s is the speed of light.

The simulated data are created by solving \eqref{l53} using a finite difference scheme. We restrict $z \in [0,1.5]$mm and we set $T >0,$ the final time.  We consider absorbing boundary conditions at the end points and we set $u(0,z) = u_0 (0,z)$ and $\partial_t u (0,x) = 0$ as initial conditions. The left-going wave is ignored. We consider equidistant grid points  with step size $\Delta z = \lambda_0 /100,$ where $\lambda_0 = 2\pi c/\omega_0,$ is the central wavelength, and time step $\Delta t$ such that the CFL condition is satisfied. 

\begin{figure}[t]
\begin{center}
\includegraphics[width=0.9\textwidth]{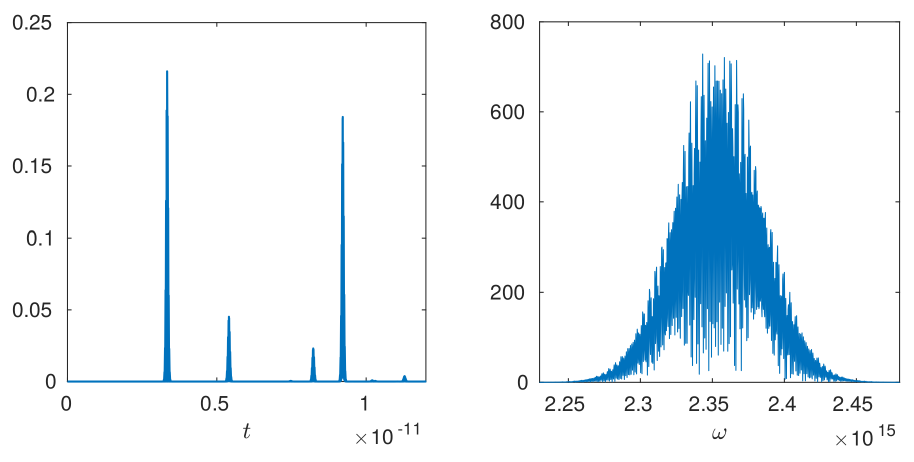}
\caption{The simulated data (absolute value) in the time-domain for the first example (left) and in the frequency-domain for the second example (right).}\label{fig1}
\end{center}
\end{figure}

The measurement data are given by
\begin{equation}\label{data_time}
m (t) = |(u-u_0)(t, z_d)|, \quad t \in (0,T],
\end{equation}
where $z_d$ denotes the position of the point detector. We add noise with respect to the $L^2$ norm
\[
m_\delta = m + \delta \frac{\norm{m}_2}{\norm{v}_2}v,
\]
where $v$ is a vector with components normally distributed random variables and $\delta$ denotes the noise level. We have to stress that the total time is such that the data contain also information from the multiple reflections inside the medium. We define the length of the $k-$th layer 
\[
\ell_k = z_{k+1} - z_k, \quad \mbox{for} \quad k = 1,...,N,
\]
and we set $\ell_{-1} = |z_d - z_0|$, and $\ell_0 = z_0 - z_d.$ We denote by
\[
\rho_k = \frac{n_{k-1}-n_k}{n_{k-1}+n_k}, \quad \mbox{for} \quad k = 1,...,N,
\]
the reflection coefficient at the interface $z=z_k.$

Here, we assume that we know only $n_0 = 1,$ and the positions of the source and the detector. Thus, we aim for recovering the position, the size and the optical properties of the medium. 

The proposed iterative scheme for a $N$-layer medium is presented in Algorithm \ref{alg1}, where the output is the reconstructed refractive indices and lengths of the layers.  First,  we order the observed $``$peaks$"$ at the image with respect to time, producing the set of data $(t_l, \, p_l), $ for $l=1,2,...,\Lambda.$ The number $\Lambda \geq N$ describes the number of single and multiple reflections arrived at the detector before the final time $T$. In order to obtain a physically compatible solution we impose some bounds $[\underline{n},\, \overline{n}]$ on the refractive index. This condition is not necessary for data with phase information. We update the data by neglecting the multiple reflections. To do so, once we have recovered the length and the refractive index of a layer, we neglect the $``$peaks$"$ appearing later referring to multiple reflections inside this layer. Of course, because of numerical error and noisy data, we give a tolerance depending on the time duration of the wave.

\renewcommand{\arraystretch}{1.6}
\setlength{\tabcolsep}{0.5em}
\begin{table}[t]
\begin{center}
\begin{tabular}{|c|c|c|c|c|} \hline
length (mm) & $\ell_0$ &    $\ell_1$ & $\ell_2$ &    $\ell_3$  \\ \hline
 exact        &     0.50000  &   0.20000 &    0.30000   &   0.10000  \\ \hline 
reconstructed  (noise free)    &         0.50000   &  0.20004 &    0.29990 &     0.10006    \\ \hline
reconstructed ($5\%$ noise)    &         0.49960  &   0.20021 &    0.30030  &    0.09976     \\ \hline \hline
refractive index & $n_1$ & $n_2$ & $n_3$ & $n_4$ \\ \hline
  exact        &     1.55000  &   1.41000  &  1.48000  & 1.00000    \\ \hline 
reconstructed  (noise free)   &       1.55107 &   1.41070 &  1.48087  & 1.00014    \\ \hline
reconstructed ($5\%$ noise)     &  1.55272  &  1.40740  & 1.48170  & 0.99851   \\ \hline
\end{tabular}
\end{center}
\caption{Reconstructed values using Algorithm \ref{alg1} for a three-layer medium.  }\label{table1}
\end{table}

We define the error function
\[
\epsilon = \left(\sum_{k=1}^N (n_k \ell_k - \tilde n_k \tilde \ell_k )^2\right)^{1/2},
\]
where $(n_k,\, \ell_k)$ and $(\tilde n_k,\, \tilde \ell_k),$ for $k=1,...,N$ are the exact and the reconstructed values, respectively. 

In the first example, we consider a three-layer medium positioned at $\ell_0 = 0.5$mm, with $\ell_{-1} =0.$ We set $(n_1, \, n_2, \, n_3) = (1.55, \, 1.41, \, 1.48)$ and lengths $(\ell_1, \, \ell_2, \, \ell_3) = (0.2, \, 0.3, \, 0.1)$mm. The obtained data \eqref{data_time} for this example  are given at the left picture in Fig. \ref{fig1}.

The results are presented in Table \ref{table1} for $[\underline{n},\, \overline{n}] = [1.345, 2]$ and $\mbox{tol} = 0.1$ps. We obtain accurate and stable reconstructions, with $\epsilon = 3.34\times 10^{-7}.$ This algorithm can be easily applied to multi-layer media and it is presented here since it will be the core of the more complicated algorithm in the Fourier domain.

\begin{algorithm}[t]
 \caption{Iterative scheme (in time) for phase-less data.}\label{alg1}
\KwResult{ $\tilde n_k$ and $\tilde \ell_{k-1},$ for $k=1,...,N. $ }
\textbf{Input:} $k=0, \, \rho_0 =0, \, n_0 =1,\, \ell_{-1}, \, \mbox{tol}$ and $(t_l, \, p_l),$ for $l=1,...,\Lambda$; \\
 \While{$k \leq N$}{
 \tcc{Step 1: Reconstruction of the refractive index.}
$ \rho_{k+1} = \frac{p_{k+1}}{\prod_{j=1}^k (1- \rho_j^2)}, \quad  \tilde{n}_{k+1}  = \tilde{n}_k  \frac{1- \rho_{k+1}}{1 +\rho_{k+1}};$ \\
  \If{$\tilde{n}_{k+1} \not\in [\underline{n},\, \overline{n}]$}{
$ \rho_{k+1} = -\rho_{k+1}, \quad \tilde{n}_{k+1}  = \tilde{n}_k  \frac{1- \rho_{k+1}}{1 +\rho_{k+1}}; $ \\
  }
  \tcc{Step 2: Reconstruction of the length.}
$ \tilde \ell_k =  \frac1{2}\left(c\left(\frac{t_{k+2}-t_{k+1}}{n_k}\right) - \tilde \ell_{k-1}\right); $ \\
     \tcc{Step 3: Update the data.}
   \For{$j = 1: \lfloor\Lambda/N\rfloor$}{
   $ \tau_{k+1} = t_{k+1} + j \frac{2 \tilde{n}_k \tilde\ell_k}{c}; $ \\
  \For{ $\kappa = k:\Lambda$}{
\If{$|t_\kappa - \tau_{k+1}| < \mbox{tol}$}{ 
  $ p_\kappa = 0; $ \\  
}  
}
}
k = k +1\;
}
\end{algorithm}

\subsection{Reconstructions having phase information in frequency domain}

We aim to reconstruct the time-dependent refractive index, meaning its frequency-dependent Fourier transform. As discussed already in Sec. \ref{subsec_phase}, it is possible from the phase-less OCT data to recover the full information, implying that we consider as measurement data the function
\begin{equation}\label{eq_freq_data}
\hat m (\omega) = (\hat u- \hat u_0) (\omega, z_d), \quad \omega \in [\underline{\omega},\overline{\omega}].
\end{equation}
Here, the frequency interval $[\underline{\omega},\overline{\omega}],$ with $\overline{\omega} >\underline{\omega} \gg 0,$ models the OCT data, recorded by a CCD camera placed after a spectrometer with wavelength range $[2\pi c/\overline{\omega}, 2\pi c/\underline{\omega} ],$ in a frequency-domain OCT system.

\subsubsection{Non-dispersive medium}

In order to construct the data \eqref{eq_freq_data}, we consider the time-dependent back-reflected field derived in the previous section, we add noise and we take its Fourier transform with respect to time. Then, we truncate the signal at the interval $[\underline{\omega},\overline{\omega}],$ see the right picture in Fig. \ref{fig1}.

\begin{figure}[t]
\begin{center}
\includegraphics[width=0.9\textwidth]{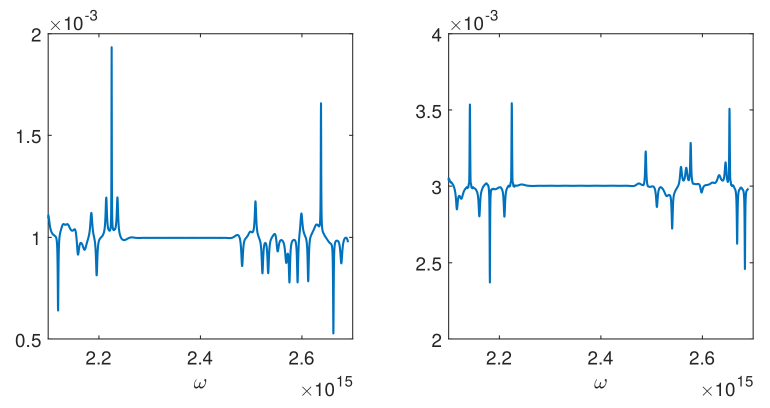}
\caption{The function $\phi(\omega),$ for $\omega \in [\underline{\omega},\overline{\omega}],$ at the first (left) and the last (right) iteration step of Algorithm  \ref{alg2}.}\label{fig2}
 \vspace{2cm}
\includegraphics[width=0.9\textwidth]{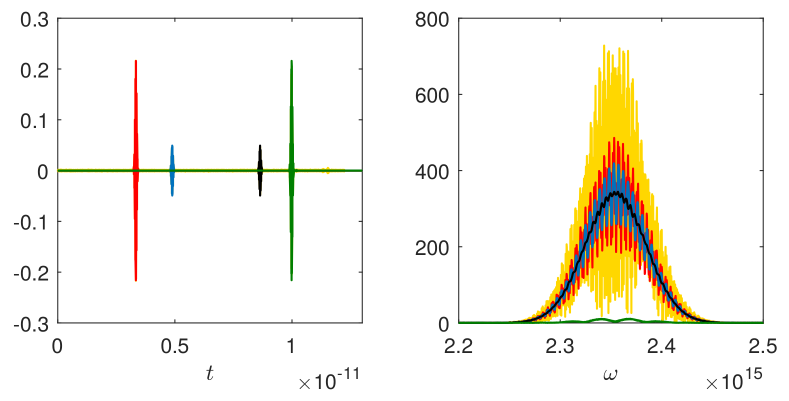}
\caption{The step 3 of the Algorithm \ref{alg2}, where we update the data. The (color) curve (right) is the signal in the frequency domain if we neglect the (color) and all the previous $``$peaks$"$ in the time-domain signal (left).}\label{fig3}
\end{center}
\end{figure}

In Algorithm \ref{alg2} we present the main steps of the iterative scheme as described in Sec. \ref{subsec_dis}. In Step 1, we take advantage of the causality property of the time-dependent signal and we zero-pad $\hat m(\omega),$ for all $\omega \in \R \setminus [\underline{\omega},\overline{\omega}],$ and then we recover the signal as two times the real part of the inverse Fourier transformed field. In the second step, we initially approximated the derivative with respect to frequency using finite differences but it did not produce nice reconstructions due to the highly oscillating signal. We replace the derivative with a high-order differentiator filter taking into account the sampling rate of the signal. In Fig. \ref{fig2}, we plot the function $\phi(\omega), \, \omega \in [\underline{\omega},\overline{\omega}],$ and we see that it is constant in a central interval (called trusted) and oscillates close to the end points. Thus, we denote by $\omega^\ast$ either a chosen frequency in the trusted interval or the mean of the frequencies in this trusted interval. We address here that one could also average over the whole spectrum and still get reasonable results.

\begin{algorithm}[t]
\KwResult{ $\tilde n_k$ and $\tilde \ell_{k-1},$ for $k=1,...,N. $ } 
\textbf{Input: }$k=0, \, \rho_0 =0, \, n_0 =1,\, \ell_{-1}, \, \mbox{tol},$ and $\hat m(\omega) ,$ for $\omega \in [\underline{\omega},\overline{\omega}]$;  \\
 \While{$k \leq N$}{
 \tcc{Step 1: Reconstruction of the refractive index.}
zero-padding and IFFT of the signal $\hat m$\;
Isolate the first peak and FFT the signal to obtain $\hat m^{(k)}$\;
$\rho_{k+1} = \frac{\max |\hat m^{(k)}|}{\max (\alpha)}, \quad  \tilde{n}_{k+1}  = \tilde{n}_k  \frac{1- \rho_{k+1}}{1 +\rho_{k+1}};$\\
  \If{$\tilde{n}_{k+1} \not\in [\underline{n},\, \overline{n}]$}{
$ \rho_{k+1} = -\rho_{k+1}, \quad \tilde{n}_{k+1}  = \tilde{n}_k  \frac{1- \rho_{k+1}}{1 +\rho_{k+1}}; $ \\
  }
 \tcc{Step 2: Reconstruction of the length.}
 define $f_k (\omega) = \frac{\hat m^{(k)}}{\alpha} / |\frac{\hat m^{(k)}}{\alpha} |;$\\
 \If{$k=0$}{
 $d_k = n_0 \ell_{-1};$\\
\Else
    {
    $d_k = n_0 \ell_{-1} - 2 \tilde n_{k-1} \tilde \ell_{k-1};$
    } 
 }
  $\phi(\omega) = c |\partial_\omega f_k (\omega)|, \quad \tilde \ell_k = -\frac{-\phi(\omega^\ast) - d_k}{2\tilde{n}_k} ;$\\
  \If{$\tilde \ell_k<0$}{
$\tilde \ell_k = -\frac{\phi(\omega^\ast) - d_k}{2\tilde{n}_k};$\\  
  }
     \tcc{Step 3: Update the data.}
 $\hat m (\omega) = (\hat m (\omega)- \hat m^{(k)}(\omega))/ ( 1- \rho_{k+1}^2);$\\
k = k +1\;
}
 \caption{Iterative scheme (in frequency) using phase information for non-dispersive medium.}\label{alg2}
\end{algorithm}

In the second example, we consider again a three-layer medium with parameters  $(n_1, \, n_2, \, n_3) = (1.55, \, 1.405, \,$ $1.55)$ and lengths $(\ell_1, \, \ell_2, \, \ell_3) = (0.15, \, 0.5,$  $0.13)$mm. Here,  $\ell_0 = 0.7$mm and  $\ell_{-1} =0.2$mm. The central frequency is given by $\omega_0 = 2\pi c/ \lambda_0,$ with $\lambda_0 = 800$nm. The sampling rate is $f_s = 100 c/\lambda_0.$ The recovered parameters for noise-free and noisy data are presented in Table \ref{table2}. The relative error is $\epsilon = 2.164\times 10^{-5}.$ In Fig. \ref{fig3}, we see how the data change as the Algorithm \ref{alg2} progresses. The picture on the right shows the data in frequency domain with respect to the $``$peaks$"$ presented in the left picture where we see the time-domain data. The yellow curve (right) represents the full data, the red curve (right) the data if we neglect the red $``$peak$"$ (left), the blue curve shows the data if we neglect also the blue $``$peak$"$, and so on. The green curve (right) represents the signal from the multiple reflections. We observe that the OCT signal maintains the Gaussian form of the incident wave, centered around the central frequency, and the different reflections result to the oscillations of the field.

 \renewcommand{\arraystretch}{1.6}
\setlength{\tabcolsep}{0.5em}
\begin{table}[t]
\begin{center}
\begin{tabular}{|c|c|c|c|c|} \hline
length (mm) & $\ell_0$ &    $\ell_1$ & $\ell_2$ &    $\ell_3$  \\ \hline
 exact        &     0.70000  &   0.15000 &    0.40000   &   0.13000  \\ \hline 
reconstructed  (noise free)    &         0.69885   &  0.15210 &    0.39628 &     0.13451    \\ \hline
reconstructed ($5\%$ noise)    &         0.70340 &   0.15387 &    0.39337  &    0.13669    \\ \hline \hline
refractive index & $n_1$ & $n_2$ & $n_3$ & $n_4$ \\ \hline
  exact        &     1.55000  &   1.40500  &  1.55000  & 1.00000    \\ \hline 
reconstructed  (noise free)   &       1.55107 &   1.40568 &  1.55107  & 0.99782     \\ \hline
reconstructed ($5\%$ noise)     &  1.55164  &  1.40599  &  1.55139  &  0.99695       \\ \hline
\end{tabular}
\end{center}
\caption{Reconstructed values using Algorithm \ref{alg2} for a three-layer medium. }\label{table2}
\end{table}

\subsubsection{Dispersive medium}

The incident field in the frequency domain takes the form
\begin{equation*}
\hat u_0 (\omega,z) =  \sqrt{2\pi} \frac{\sigma}{2c} e^{-\frac{\sigma^2 (\omega - \omega_0)^2}{2c^2}}e^{i \tfrac{\omega}{c} (z-z_0)}, \quad \omega >0, \, z\in \R, 
\end{equation*}
which is the Fourier transform with respect to time of $u_0,$ given by \eqref{field_inc}, restricted to positive frequencies. This field describes a plane wave moving in the $z-$direction having a Gaussian profile perpendicular to the incident direction, centered around $\omega_0.$ We generate the data considering the formula \eqref{l43} and then we add noise. The Algorithm \ref{alg3} summarizes the steps of the iterative scheme, which for a frequency-independent refractive index simplifies to Algorithm \ref{alg2}.

We model the wavelength-dependent refractive index of the medium using the standard formula, known as Cauchy's equation, 
\[
n (\lambda)   = \beta_1 + \frac{\beta_2}{\lambda^2} + \frac{\beta_3}{\lambda^4},
\]
for some fitting coefficients $\beta_j , \, j=1,2,3.$ In Fig. \ref{fig_refra}, we see the exact refractive index of the first (left) and the third (right) layer for the medium used in the third example. Afterwards, we consider the refractive index as a function of frequency. 

\begin{figure}[t]
\begin{center}
\includegraphics[width=0.9\textwidth]{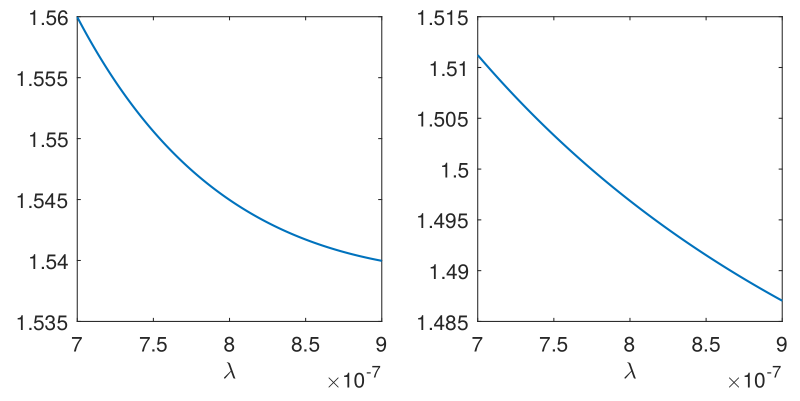}
\caption{The behavior of the medium with respect to wavelength (dispersion). The refractive index of the first (left) and the third (right) layer for the third example in the range $[700,\,900]$nm of the spectrometer.
}\label{fig_refra}
 \vspace{2cm}
\includegraphics[width=0.9\textwidth]{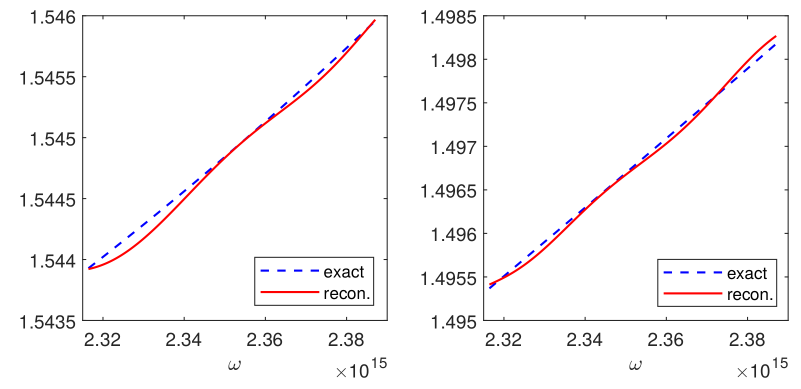}
\caption{The exact (dashed blue line) and the reconstructed (red solid line) refractive index of the first (left) and the third (right) layer. These are the results of the Algorithm \ref{alg3} for noisy data.
}\label{fig_results}
\end{center}
\end{figure}

As already discussed, the calculations close to the end points were not stable and since here we are interested  in reconstructing the frequency-dependent refractive index, we restrict the computational domain and then we extrapolate the recovered functions in order to update the data. 

The medium lengths are given by $(\ell_1, \, \ell_2, \, \ell_3) = (0.2,\, 0.3, \, 0.1)$mm, and we set $\ell_0 = 0.7$mm and  $\ell_{-1} =0.$ The second layer has constant refractive index given by $n_2 (\omega) =1.41.$ The reconstructions of $n_1 (\omega)$  and $n_3 (\omega)$ are presented in Fig. \ref{fig_results} for data with $2\%$ noise. In Table \ref{table3}, we see the recovered lengths and the refractive indices at specific frequencies.

\begin{algorithm}[H]
\KwResult{ $\tilde n_k (\omega)$ and $\tilde \ell_{k-1},$ for $k=1,...,N. $ }
\textbf{Input:} $k=0, \, \rho_0 =0, \, n_0 =1,\, \ell_{-1}, \, \mbox{tol}, \, \mathcal{W} \subset [\underline{\omega},\overline{\omega}]$ and $\hat m(\omega) ,$ for $\omega \in [\underline{\omega},\overline{\omega}]$; \\
 \While{$k \leq N$}{
 \tcc{Step 1: Reconstruction of the refractive index.}
zero-padding and IFFT of the signal $\hat m$\;
Isolate the first peak and FFT the signal to obtain $\hat m^{(k)}$\;
$\rho_{k+1} (\omega)= \frac{ |\hat m^{(k)} (\omega)|}{ \alpha  (\omega)}, \quad  \tilde{n}_{k+1}  (\omega)= \tilde{n}_k  (\omega)  \frac{1- \rho_{k+1} (\omega)}{1 +\rho_{k+1} (\omega)}, \quad \omega\in\mathcal{W};$\\
  \If{$\mbox{max}\{\tilde{n}_{k+1}  (\omega)\} > \overline{n}$ \textbf{or} $\mbox{min}\{\tilde{n}_{k+1}  (\omega)\} < \underline{n}$}{
$ \rho_{k+1}  (\omega)= -\rho_{k+1}  (\omega), \quad \tilde{n}_{k+1} (\omega)  = \tilde{n}_k  (\omega)  \frac{1- \rho_{k+1} (\omega)}{1 +\rho_{k+1} (\omega)}; $ \\
  }
 \tcc{Step 2: Reconstruction of the length.}
 define $f_k (\omega) = \frac{\hat m^{(k)}}{\alpha} / |\frac{\hat m^{(k)}}{\alpha} |, \quad \omega\in\mathcal{W};$\\
 \If{$k=0$}{
 $d_k (\omega) = n_0 \ell_{-1};$\\
\Else
    {
    $d_k  (\omega)= n_0 \ell_{-1} - 2 (\tilde n_{k-1}  (\omega)+\omega \partial_\omega \tilde n_{k-1} (\omega))\tilde \ell_{k-1};$
    } 
 }
  $\phi(\omega) = c |\partial_\omega f_k (\omega)|, \quad \psi_k (\omega) = -\frac{-\phi(\omega^\ast) - d_k (\omega)}{2(\tilde{n}_k+\omega \partial_\omega \tilde n_{k})} ;$\\
  \If{$\mbox{max}\{\psi_k \}<0$}{
$\psi_k (\omega)= -\frac{\phi(\omega^\ast) - d_k (\omega)}{2(\tilde{n}_k+\omega \partial_\omega \tilde n_{k})};$\\  
  }
  $\tilde{\ell}_k = \psi_k (\omega^\ast);$\\
     \tcc{Step 3: Update the data.}
     extrapolate $\rho_{k+1}(\omega)$ from $\mathcal{W}$ to $[\underline{\omega},\overline{\omega}];$ \\
 $\hat m (\omega) = (\hat m (\omega)- \hat m^{(k)}(\omega))/ ( 1- \rho_{k+1}^2 (\omega));$\\
k = k +1\;
}
 \caption{Iterative scheme (in frequency) using phase information for dispersive medium.}\label{alg3}
\end{algorithm}

 \renewcommand{\arraystretch}{1.6}
\setlength{\tabcolsep}{0.5em}
\begin{table}
\begin{center}
\begin{tabular}{|c|c|c|c|c|} \hline
length (mm) & $\ell_0$ &    $\ell_1$ & $\ell_2$ &    $\ell_3$  \\ \hline
 exact        &     0.70000  &   0.20000 &    0.30000   &   0.10000  \\ \hline 
reconstructed ($2\%$ noise)    &        0.69790 &    0.20139 &   0.29569  &  0.10329      \\ \hline \hline
refractive index & $n_1 (\omega^\ast)$ & $n_2(\omega^\ast)$ & $n_3(\omega^\ast)$ & $n_4$ \\ \hline
  exact        &    1.54488 &   1.41000 &   1.49675 &   1.00000    \\ \hline 
reconstructed ($2\%$ noise)     &  1.54499 &   1.41094 &   1.49884 &   1.00484    \\ \hline
\end{tabular}
\end{center}
\caption{Reconstructed values using Algorithm \ref{alg3} for a three-layer medium. The values of the refractive indices are given at $\omega^\ast =2.351\times 10^{15}.$}\label{table3}
\end{table}

\section{Conclusions}

In this work we addressed the inverse problem of recovering the optical properties of a multi-layer medium from simulated data modelling a frequency-domain OCT system. We considered the cases of non-dispersive, dispersive and absorbing media. We proposed reconstruction methods and we presented numerical examples justifying the feasibility of the derived schemes. Stable reconstruction with respect to noise were presented. The methods are based on standard equations, equivalent to the Fresnel equations, and to ideas from stripping algorithms. The originality of this work lies in the combination of them into a new iterative method that addresses also the frequency-dependent case, which needs special treatment. 
As a future work, we plan to examine the  applicability of the iterative schemes for experimental data and test numerically the method for absorbing media.

\section*{acknowledgement}
The work of PE and LV was supported by the Austrian Science Fund (FWF) in the project F6804--N36 (Quantitative Coupled Physics Imaging) within the Special Research Programme SFB F68: $``$Tomography Across the Scales$".$
%

\section*{References}
\printbibliography[heading=none]

\end{document}